\def\cite{\citet}
\def\ce{{\mathcal E}}
\def\ca{{\mathcal A}}
\def\cf{{\mathcal F}}
\def\cg{{\mathcal G}}
\def\cn{{\mathcal N}}
\def\cv{{\mathcal V}}
\def\E{{\mathbb E}}
\def\N{{\mathbb N}}
\def\P{{\mathbb P}}
\def\R{{\mathbb R}}
\def\Rp{{{\mathbb R}_+^*}}
\def\s{\star}
\def\t{\theta}
\def\vt{\vartheta}
\def\vl{\lambdaup}
\def\l{\lambda}
\def\um{\underline{m}}
\def\ind#1{{\bf 1}_{\left\{#1\right\}}}
\def\abs#1{\left|#1\right|}
\def\Cov{\mathop{\rm Cov}\nolimits}
\def\norm#1{\mathop{\left| #1 \right|}\nolimits}
\def\norm1#1{\mathop{\left| #1 \right|_1}\nolimits}
\def\inv#1{\mathop{\frac{1}{ #1}}\nolimits}
\def\expp#1{\mathop {\mathrm{e}^{ #1}}}
\def\diag{\mathop {\mathrm{diag}}\nolimits}
\def\defeq{\stackrel{\rm def}{=}}
\theoremstyle{plain}
\newtheorem{thm}{Theorem}[section]
\newtheorem{prop}[thm]{Proposition}
\newtheorem{lemma}[thm]{Lemma}
\newtheorem{algo}[thm]{Algorithm}
\newtheorem{rem}[thm]{Remark}
\newtheorem{cor}[thm]{Corollary}
\theoremstyle{definition}
\numberwithin{equation}{section}
\newcounter{hypo}
\newcommand*{\dohypo}{\textbf{(${\mathcal A}$\thehypo)}}
\newenvironment{hypo}[1][]{%

\refstepcounter{hypo}
\list{}{%
\settowidth{\labelwidth}{\dohypo}%
\setlength{\labelsep}{10pt}%
\setlength{\leftmargin}{\labelwidth}
\advance\leftmargin\labelsep%
}%
\item[\dohypo  #1]%
  }{%
\endlist
}
\def\hypref#1{\hyperref[#1]{(${\mathcal A}$\ref*{#1})}}
\def\hypreff#1#2{\hyperref[#2]{(\mbox{${\mathcal A}$\ref*{#1}-{\it\ref*{#2})}}}}
\newcommand*\samethanks[1][\value{footnote}]{\footnotemark[#1]}
\title{Importance sampling for jump processes and applications to finance}
\date{\today}
\author{Laetitia Badouraly Kassim\thanks{Univ. Grenoble Alpes, Laboratoire Jean
Kuntzmann, BP 53, 38041 Grenoble Cédex 9, FRANCE, e-mail:
Laetitia.Badouraly-Kassim@ensimag.imag.fr,  jerome.lelong@imag.fr,
imane.loumrhari@ensimag.imag.fr.  \newline This project was supported by the
Finance for Energy Market Research Centre, www.fime-lab.org.} \and Jérôme
Lelong\samethanks[1] \and Imane Loumrhari\samethanks[1]}
\begin{document}
\maketitle

\begin{abstract}
  Adaptive importance sampling techniques are widely known for the Gaussian
  setting of Brownian driven diffusions. In this work, we want to extend them to
  jump processes. Our approach relies on a change of the jump intensity combined
  with the standard exponential tilting for the Brownian motion. The free
  parameters of our framework are optimized using sample average approximation
  techniques. We illustrate the efficiency of our method on the valuation of
  financial derivatives in several jump models. \\

  \noindent \textbf{Keywords}: Importance sampling; sample average approximation; adaptive Monte Carlo methods.
\end{abstract}

\section{Introduction}

Lévy models have become quite popular in finance over the last decade. Vanilla
options are easily and efficiently priced using the Fast Fourier Transform
approach developed by \cite{CarrMadan99} but things become far more delicate
for exotic options, for which Monte Carlo often reveals as the only possible
approach from a numerical point of view. This becomes even more true when
dealing with high dimensional products. In this work, we want to propose an
adaptive Monte Carlo method based on importance sampling for computing the
expectation of a function of a Lévy process. As explained
by \cite{Kiessling:2011uq}, when resorting to Monte
Carlo approaches, infinite activity Lévy processes are often approximated by
finite activity processes, which can always be represented as the sum of
a continuous diffusion (ie. driven by a Brownian motion) and a compound Poisson
process. In this work, we will concentrate on such jump diffusions with a
Brownian driven part and a jump part written as a compound Poisson process or
possibly the sum of independent compound Poisson processes in the
multidimensional case. \\

We consider a mixed Gaussian and Poisson framework in which we would like to
settle an adaptive Monte Carlo method based on some importance sampling approach.
Let $G = (G_1, \dots, G_d)$ be a standard normal random vector in $\R^d$ and
$N^\mu = (N^{\mu_1}_1, \dots, N^{\mu_p}_p)$ a vector of $p$ independent
Poisson random variables with parameters $\mu = (\mu_1, \dots,
\mu_p)$. We assume that $G$ and $N^\mu$ are independent. We focus
on the computation of 
\begin{align}
  \label{eq:PE}
  \ce = \E[f(G,N^\mu)]
\end{align}
where $f : \R^d \times \N^p \longrightarrow \R$ satisfies $\E[|f(G,N^\mu)|] <
\infty$.

\begin{lemma}
  \label{lem:chgt}
  For any measurable function $h : \R^d \times \N^p \longrightarrow \R$ either
  nonnegative or such that $\E[|h(G,N^\mu)|] < \infty$, one has
  \begin{align}
    \label{eq:IS}
    \forall \; \t \in \R^d, \l \in \Rp^p, \qquad \E[h(G,N^\mu)] =
    \E\left[ h(G+\t, N^\l) \expp{- \t \cdot G - \frac{|\t|^2}{2}} \prod_{i=1}^p
    \expp{\l_i -\mu_i} \left(\frac{\mu_i}{\l_i}\right)^{N^{\l_i}_i}
    \right]
  \end{align}
  where $N^\l$ is a vector of $p$ independent Poisson random variables with
  parameters $\l = (\l_1, \dots, \l_p)$.
\end{lemma}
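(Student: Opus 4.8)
The plan is to use the independence of $G$ and $N^\mu$ to factorise the expectation, and then to recognise on each factor the classical likelihood ratio associated with the corresponding change of parameter. Writing $\phi$ for the density of the standard Gaussian on $\R^d$ and, for $n=(n_1,\dots,n_p)\in\N^p$, $\P(N^\mu=n)=\prod_{i=1}^p\expp{-\mu_i}\mu_i^{n_i}/n_i!$, conditioning on $N^\mu$ gives
\begin{align*}
  \E[h(G,N^\mu)] = \sum_{n\in\N^p}\P(N^\mu=n)\int_{\R^d}h(g,n)\,\phi(g)\,dg .
\end{align*}

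For the inner integral, the translation $g\mapsto g+\t$ together with the elementary identity $\phi(g+\t)=\phi(g)\,\expp{-\t\cdot g-|\t|^2/2}$ (the Cameron--Martin shift) gives $\int_{\R^d}h(g,n)\phi(g)\,dg=\E[h(G+\t,n)\,\expp{-\t\cdot G-|\t|^2/2}]$. For the outer sum, since $\l\in\Rp^p$ all the $\l_i$ are strictly positive, so one may divide by $\P(N^\l=n)=\prod_{i=1}^p\expp{-\l_i}\l_i^{n_i}/n_i!$ and obtain the likelihood ratio $\P(N^\mu=n)/\P(N^\l=n)=\prod_{i=1}^p\expp{\l_i-\mu_i}(\mu_i/\l_i)^{n_i}$, which recasts the sum as an expectation with respect to $N^\l$. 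Plugging the first identity into the second and using once more the independence of $G$ and $N^\l$ yields exactly \eqref{eq:IS}.

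The only point that needs a word of justification is the validity of these interchanges of sum and integral. When $h\geq 0$ every term above is nonnegative and Tonelli's theorem applies directly. For a general $h$ with $\E[|h(G,N^\mu)|]<\infty$, I would first run the nonnegative computation with $|h|$ in place of $h$: this shows that the integrand on the right-hand side of \eqref{eq:IS} is absolutely integrable, after which the identity for $h$ follows by splitting $h=h^+-h^-$ and subtracting. I do not anticipate any real difficulty here — the statement is essentially the combination of two textbook facts (exponential tilting of a Gaussian and of a Poisson law), and the bookkeeping of the two weights is the only thing to get right.
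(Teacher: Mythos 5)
Your proof is correct and is exactly the "elementary variable changes" the paper alludes to without writing out: the Cameron--Martin translation for the Gaussian factor, the Poisson likelihood ratio $\P(N^\mu=n)/\P(N^\l=n)$ for the jump factor, glued together by independence, with Tonelli for $h\ge 0$ and the decomposition $h=h^+-h^-$ in the integrable case. Nothing to add.
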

The proof of this lemma relies on elementary variable changes.
Lemma~\ref{lem:chgt} enables us to introduce some extra degrees of freedom in
the computation of $\ce$. When the expectation $\ce$ is computed using a Monte
Carlo method, the Central Limit Theorem advises to use the representation of
$f(G,N^\mu)$ with the smallest possible variable which is achieved by choosing
the parameters $(\t, \l)$ which minimize the variance of of $f(G+\t, N^\l)
\expp{- \t \cdot G - \frac{|\t|^2}{2}} \prod_{i=1}^p \expp{\l_i -\mu_i}
\left(\frac{\mu_i}{\l_i}\right)^{N^{\l_i}_i}$. This raises several
questions which are investigated in the paper. Does the variance of $f(G+\t,
N^\l) \expp{- \t \cdot G - \frac{|\t|^2}{2}} \prod_{i=1}^p \expp{\l_i -\mu_i}
\left(\frac{\mu_i}{\l_i}\right)^{N^{\l_i}_i}$ admits a unique minimizer? If so,
how can it be computed numerically and how to make the most of it in view of a
further Monte Carlo computation? \\

These questions are quite natural in the context of Monte Carlo computations and
have already been widely discussed in the pure Gaussian framework. The first
applications to option pricing of some adaptive Monte Carlo methods based on
importance sampling goes back to the papers of \cite{arouna1,arouna2}. These
papers were based on a change of mean for the Gaussian random normal vectors and
the optimal parameter was searched for by using some stochastic approximation
algorithm with random truncations. This approach was later further investigated
by \cite{lap-lelo-11} who proposed a more general framework for settling
adaptive Monte Carlo methods using stochastic approximation, which is know to be
a little tricky to fine tune in practical applications. To circumvent the
delicate behaviour of stochastic approximation, \cite{jour-lelo-09} proposed to
resort to sample average approximation instead, which basically relies on
deterministic optimization techniques. An alternative to random truncations was
studied by \cite{LP10} who managed to modify the initial problem in order ta
apply the more standard Robbins Monro algorithm. Not only have they applied
this to the Gaussian framework but they have considered a few examples of Levy
processes relying on the Esscher transform to introduce a free parameter. The
idea of using the Esscher transform was also extensively investigated by
\cite{kawai07,Kawai-levy08,Kawai-levy09}. \\

In this work, we want to understand how the jump intensity of a Lévy process can
be modified to reduce the variance. First, we explain the parametric importance
sampling transformation we use for the Gaussian and Poisson parts. Then, in
Section~\ref{sec:optimal}, we prove that this transformation leads to a convex
optimization problem and we study the properties of the optimal parameter
estimator. Then, in Section~\ref{sec:adap}, we explain how to use this estimator
in a Monte Carlo method. We prove that this approach satisfies an adaptive
strong law of large numbers and a central limit theorem with optimal limiting
variance.  Finally, in Section~\ref{sec:apps}, we apply our methodology to option
pricing with jump processes. 

\paragraph{Notations.}
\begin{itemize}
  \item We encode any elements of $\R^m$ as column vectors.
  \item If $x \in \R^m$, $x^*$ is a row vector. We use the ``${}^*$''
    notation to denote the transpose operator for vectors and matrices.
  \item If $x, y \in \R^m$, $x \cdot y$ denotes the scalar
    product of $x$ and $y$ and the associated norm is denoted by $|\cdot|$.
  \item If $x \in \R^m$, $\diag_m(x)$ is the matrix with diagonal elements given
    by the vector $x$ and all extra diagonal elements equal to zero.
  \item The matrix $I_m$ denotes the identity matrix in dimension $m$.
  \item If $x \in \R^m$, we defined $d_0(x) = \min_{1 \le i \le m} |x_i|$ which
    is the distance between $x$ and the set $\{ y \in \R^m \; : \;
    \prod_{i=1}^m y_i = 0 \}$.
  \item We say that a random vector $X$ with values in $\R^m$ has Poisson
    distribution with parameter $\mu \in \R^m$ if the $X_i$ are independent
    and have Poisson distribution with parameter $\mu_i$.
\end{itemize}

\section{Computing the optimal importance sampling parameters}
\label{sec:optimal}
\subsection{Properties of the variance}

Thanks to Lemma~\ref{lem:chgt}, the expectation $\ce$ can be written
\begin{align*}
  \ce = \E\left[ f(G+\t, N^\l) \expp{- \t \cdot G - \frac{|\t|^2}{2}}
  \prod_{i=1}^p \expp{\l_i -\mu_i}
  \left(\frac{\mu_i}{\l_i}\right)^{N^{\l_i}_i} \right], \quad \forall
  \; \t \in \R^d, \l \in \Rp^p.
\end{align*}
Note that for the particular choice of $\t=0$ and $\l=\mu$, we recover
Equation~\eqref{eq:PE}.

The convergence rate of a Monte Carlo estimator of $\ce$ based on this new
representation is governed by the variance  of $f(G+\t, N^\l) \expp{- \t \cdot
G - \frac{|\t|^2}{2}} \prod_{i=1}^p \expp{\l_i -\mu_i}
\left(\frac{\mu_i}{\l_i}\right)^{N^{\l_i}_i}$ which can be written in the
form $v(\t, \l) - \ce^2$ where
\begin{align}
  \label{eq:var}
  v(\t, \l) =   \E\left[ f(G, N^\mu)^2 \expp{- \t \cdot G + \frac{|\t|^2}{2}}
  \prod_{i=1}^p \expp{\l_i -\mu_i} \left(\frac{\mu_i}{\l_i}\right)^{N^{\mu_i}_i}
  \right].
\end{align}
This expression of $v$ is easily obtained by applying Lemma~\ref{lem:chgt} to
the function $h(g, n) = f(g+\t, n)^2 \expp{- 2 \t \cdot g - |\t|^2}
\prod_{i=1}^p \expp{2 (\l_i -\mu_i)}
\left(\frac{\mu_i}{\l_i}\right)^{2 n_i}$. Applying the change of measure
backward after computing the variance enables us to write the variance in a form
which does not involve the parameters $\t$ and $\l$ in the arguments of the
function $f$. This remark is of prime importance as it is the basement of the
following key result stating the strong convexity of $v$.
\begin{prop}
  \label{prop:convex}
  Assume that
  \begin{hypo}
    \label{hyp0}
    \begin{subhypo}  
    \item \label{positive} $\exists (n_1, \dots, n_p) \in {\N^*}^p, \; s.t. \;
      \P(|f(G, (n_1, \dots, n_p))| >0 ) > 0$
    \item\label{integrable} $\exists \gamma>0, \quad \E\left[  |f(G,
      N^\mu)|^{2+\gamma}  \right] < \infty$.
    \end{subhypo}
  \end{hypo}
  Then, the function $v$ is infinitely continuously differentiable, strongly
  convex and moreover the gradient vectors are given by
  \begin{align}
    \label{eq:gradv_t}
    \nabla_{\t} v(\t, \l) = & \E\left[ \left(\t - G\right)
    f(G, N^\mu)^2 \expp{- \t \cdot G + \frac{|\t|^2}{2}}
    \prod_{i=1}^p \expp{\l_i -\mu_i} \left(\frac{\mu_i}{\l_i}\right)^{N^{\mu_i}_i}
    \right]
    \\
    \label{eq:gradv_mu}
    \nabla_{\l} v(\t, \l) = & \E\left[ a(N^\mu,\l) f(G, N^\mu)^2 \expp{- \t \cdot G +
    \frac{|\t|^2}{2}} \prod_{i=1}^p \expp{\l_i -\mu_i}
    \left(\frac{\mu_i}{\l_i}\right)^{N^{\mu_i}_i} \right]
  \end{align}
  where the vector $a(N^\mu,\l) = \left(1 - \frac{N_1^{\mu_1}}{\l_1}, \dots, 1 -
  \frac{N_p^{\mu_p}}{\l_p}\right)^*$.
  The second derivatives are defined by
  \begin{align}
    \label{eq:hessv_t}
    \nabla^2_{\t,\t} v(\t, \l) = & \E\left[ \left( I_d + (\t - G) (\t -G)^*\right)
    f(G, N^\mu)^2 \expp{- \t \cdot G + \frac{|\t|^2}{2}} \prod_{i=1}^p \expp{\l_i
    -\mu_i} \left(\frac{\mu_i}{\l_i}\right)^{N^{\mu_i}_i} \right] \\
    \label{eq:hessv_tmu}
    \nabla^2_{\t,\l} v(\t, \l) = & \E\left[ (\t - G) a(N^\mu,\l)^*
    f(G, N^\mu)^2 \expp{- \t \cdot G + \frac{|\t|^2}{2}} 
    \prod_{i=1}^p \expp{\l_i -\mu_i}
    \left(\frac{\mu_i}{\l_i}\right)^{N^{\mu_i}_i} \right] \\
    \label{eq:hessv_mu}
    \nabla^2_{\l,\l} v(\t, \l) = & 
    \E\left[ \left(D+ a(N^\mu,\l) a(N^\mu,\l)^* \right)
    f(G, N^\mu)^2 \expp{- \t \cdot G + \frac{|\t|^2}{2}} 
    \prod_{i=1}^p \expp{\l_i -\mu_i} \left(\frac{\mu_i}{\l_i}\right)^{N^{\mu_i}_i} \right]
  \end{align}
  where the diagonal matrix $D$ is defined by $D =
  \diag_p\left(\frac{N_1^{\mu_1}}{\l_1^2}, \dots,
  \frac{N_p^{\mu_p}}{\l_p^2}\right)$. 
\end{prop}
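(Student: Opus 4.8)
The plan is to work throughout on the representation $v(\t,\l)=\E[F(\t,\l;G,N^\mu)]$, where
\[
  F(\t,\l;g,n)=f(g,n)^2\,\expp{-\t\cdot g+\frac{|\t|^2}{2}}\prod_{i=1}^p\expp{\l_i-\mu_i}\Big(\frac{\mu_i}{\l_i}\Big)^{n_i},
\]
and to establish in turn the $C^\infty$ regularity with formulas \eqref{eq:gradv_t}--\eqref{eq:hessv_mu}, then convexity, then strong convexity. For each fixed $(g,n)$ the map $(\t,\l)\mapsto F(\t,\l;g,n)$ is $C^\infty$ on $\R^d\times\Rp^p$, and a one-line computation gives $\nabla_\t F=(\t-g)F$, $\nabla_\l F=a(n,\l)F$ with $a(n,\l)=(1-n_1/\l_1,\dots,1-n_p/\l_p)^*$, and $\nabla^2_{(\t,\l)}F=M(\t,\l;g,n)\,F$ where
\[
  M=\begin{pmatrix} I_d+(\t-g)(\t-g)^* & (\t-g)a(n,\l)^*\\ a(n,\l)(\t-g)^* & D(n,\l)+a(n,\l)a(n,\l)^*\end{pmatrix},\qquad D(n,\l)=\diag_p\Big(\frac{n_1}{\l_1^2},\dots,\frac{n_p}{\l_p^2}\Big).
\]
These are exactly the integrands of \eqref{eq:gradv_t}--\eqref{eq:hessv_mu}, so everything reduces to justifying differentiation under the expectation and then reading off the sign of $\nabla^2v$.

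For the differentiation, fix a compact $K\times L\subset\R^d\times\Rp^p$ (so the $\l_i$ range over a compact subinterval of $(0,\infty)$). On $K\times L$ every partial derivative of $F$ of order at most $k$ is bounded by $f(g,n)^2\,Q_k(|g|,n)\,\expp{c_k|g|}\prod_{i=1}^p b_{k,i}^{\,n_i}$ for a polynomial $Q_k$ and positive constants $c_k,b_{k,i}$ depending only on $k$ and $K\times L$. By Hölder's inequality with exponents $\frac{2+\gamma}{2}$ and $\frac{2+\gamma}{\gamma}$, the expectation of this bound is at most $\E[|f(G,N^\mu)|^{2+\gamma}]^{2/(2+\gamma)}$, which is finite by \hypreff{hyp0}{integrable}, times the $L^{(2+\gamma)/\gamma}$-norm of $Q_k(|G|,N^\mu)\expp{c_k|G|}\prod_i b_{k,i}^{\,N_i^{\mu_i}}$, which is finite because $G$ is Gaussian and $N^\mu$ Poisson — hence have all exponential moments — and are independent. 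Dominated convergence then permits differentiation under $\E$ to any order, yielding $v\in C^\infty$ and formulas \eqref{eq:gradv_t}--\eqref{eq:hessv_mu}.

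Convexity is algebraic: the decomposition $M=\begin{pmatrix}I_d&0\\0&D(n,\l)\end{pmatrix}+\begin{pmatrix}\t-g\\a(n,\l)\end{pmatrix}\begin{pmatrix}\t-g\\a(n,\l)\end{pmatrix}^*$ exhibits $M$ as a positive semidefinite block-diagonal matrix (recall $D(n,\l)\succeq0$) plus a rank-one positive semidefinite matrix, so $M\succeq0$; since $F\ge0$, $\nabla^2v(\t,\l)=\E[MF]\succeq0$. For positive-definiteness, note that for $(u,z)\in\R^d\times\R^p$,
\[
  (u^*,z^*)\,\nabla^2v(\t,\l)\begin{pmatrix}u\\z\end{pmatrix}=\E\!\left[\Big(|u|^2+z^*D(N^\mu,\l)z+\big(u^*(\t-G)+z^*a(N^\mu,\l)\big)^2\Big)F(\t,\l;G,N^\mu)\right].
\]
If $u\neq0$ this is at least $|u|^2\,\E[F]=|u|^2v(\t,\l)$, and $v(\t,\l)>0$ because the summand of the expectation corresponding to $N^\mu=n^*$ is strictly positive by \hypreff{hyp0}{positive}. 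If $u=0$ and $z\neq0$, then $z^*D(n,\l)z+(z^*a(n,\l))^2=0$ forces $n_iz_i=0$ for all $i$ together with $\sum_iz_i=0$; in particular this quantity is strictly positive whenever some $n_i\neq0$ with $z_i\neq0$, hence at $n=n^*\in{\N^*}^p$, so the integrand is strictly positive on the positive-probability event $\{N^\mu=n^*\}\cap\{|f(G,n^*)|>0\}$. Thus $\nabla^2v(\t,\l)\succ0$ everywhere, and since $\nabla^2v$ is continuous its smallest eigenvalue is a positive continuous function, hence bounded below by a positive constant on every compact set — the strong convexity. (A modulus uniform over all of $\R^d\times\Rp^p$ additionally requires a uniform lower bound on $v$: restrict to $\{N^\mu=n^*\}$, bound $\E[f(G,n^*)^2\expp{|\t|^2/2-\t\cdot G}]$ below by Jensen's inequality, and minimise each factor $\expp{\l_i-\mu_i}(\mu_i/\l_i)^{n^*_i}$ over $\l_i>0$, which has a strictly positive minimum since $n^*_i\ge1$; the remaining quadratic forms are handled similarly.)

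The regularity and the derivative formulas are routine once the domination of the second paragraph is in place, and convexity is immediate from the block structure of $M$; the real content is the (strict, resp. strong) convexity, where assumption \hypreff{hyp0}{positive} — the existence of some $n^*$ with \emph{all} coordinates positive on which $f$ is not almost surely zero — is exactly what rules out degeneracy of the Poisson block of the Hessian. If one insists on a modulus uniform on the whole unbounded parameter set, the behaviour of that block as $\l\to\infty$, where $D(N^\mu,\l)\to0$, is the part I expect to demand the most care.
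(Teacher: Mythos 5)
Your argument follows the paper's proof essentially step for step: the same function $F$, the same domination-on-compacts-plus-Hölder justification for differentiating under the expectation (the paper's bounds \eqref{eq:boundgradtheta}--\eqref{eq:boundgradmu} are your $Q_k$, $c_k$, $b_{k,i}$ estimates written out), and the same decomposition of the Hessian integrand into a rank-one matrix plus $\diag(I_d,D)$. Your verification of strict positive definiteness through the quadratic form restricted to the event $\{N^\mu=n^\s\}$, $n^\s=(n_1,\dots,n_p)$, is a mild rephrasing of the paper's matrix lower bound, and everything up to and including strict convexity is correct.

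The one point you leave open --- a convexity modulus uniform over all of $\R^d\times\Rp^p$, which is what the statement means by ``strongly convex'' and what the paper invokes right afterwards to get a (unique) minimizer --- does close, and the fix is exactly where you point: do not minimize $\E[F\ind{N^\mu=n^\s}]$ and the entries of $D$ separately. Since $G$ and $N^\mu$ are independent,
\[
\E\big[F(G,\t,N^\mu,\l)\ind{N^\mu=n^\s}\big]=\E\Big[f(G,n^\s)^2\expp{-\t\cdot G+\frac{|\t|^2}{2}}\Big]\prod_{i=1}^p\expp{\l_i-2\mu_i}\,\frac{\mu_i^{2n_i}}{\l_i^{n_i}\,n_i!},
\]
and the Gaussian factor is bounded below by $\E[|f(G,n^\s)|]^2>0$ uniformly in $\t$ by Cauchy--Schwarz (this is the paper's chain of inequalities). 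For the $j$-th Poisson diagonal entry of the lower bound, namely $\E[F\ind{N^\mu=n^\s}]\cdot n_j/\l_j^2$, the $\l_j$-dependence is therefore $n_j\expp{\l_j}\l_j^{-(n_j+2)}$, which attains a strictly positive minimum over $\l_j\in(0,\infty)$ at $\l_j=n_j+2$, while each factor $\expp{\l_i}\l_i^{-n_i}$ for $i\neq j$ is minimized at $\l_i=n_i\ge 1$. In other words the exponential growth in $\l_j$ of the conditional expectation beats the $\l_j^{-2}$ decay of $D$, and the smallest eigenvalue of $\nabla^2 v$ is bounded below by a positive constant on all of $\R^d\times\Rp^p$. (The paper's displayed argument, read literally, also minimizes the two factors separately and so shares your loose end; the above is the intended reading. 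Note that this is a second place where $n^\s\in{\N^*}^p$ --- all coordinates nonzero --- is indispensable, beyond the nondegeneracy of the Poisson block that you already identified.)
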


\begin{proof}
  Let us define the function $F: \R^d \times \R^d \times \Rp^p \longrightarrow
  \R$ by
  \begin{equation}
    \label{eq:F}
    F(g,\t,n,\l) = f(g, n)^2 \expp{- \t \cdot g + \frac{|\t|^2}{2}}
    \prod_{i=1}^p \expp{\l_i -\mu_i} \left(\frac{\mu_i}{\l_i}\right)^{n_i}.
  \end{equation}
  For any values of $(g,n)$, the function $(\t,\l) \longmapsto F(g, \t, n, \l)$
  is infinitely continuously differentiable. Since for all $0 < \um < M$,
  \begin{align}
    \label{eq:boundgradtheta}
    \sup_{|(\t,\l)| \le M, \um < d_0(\l) } |\partial_{\t_j}
    F(G,\t,N^\mu,\l)|& \le  \left(M + \expp{G_j} + \expp{-G_j} \right) f(G,
    N^\mu)^2 \expp{M^2/2 + pM}\nonumber \\
    & \quad \prod_{k=1}^d (\expp{M G_k} + \expp{-M G_k}) \prod_{i=1}^p \expp{-\mu_i}
    \left(\frac{\mu_i}{\um}\right)^{N^{\mu_i}_i} 
  \end{align}
  where the right hand side is integrable because by Hölder's inequality and
  Assumption~\hypreff{hyp0}{integrable}, we have that for all $(\t,\l) \in \R^d
  \times \R^p$,
  $\E(f(G,N^\mu)^2 \expp{\t \cdot G + \l \cdot N^\mu}) < \infty$.
  Hence, Lebesgue's theorem ensures that $v$ is continuously differentiable
  w.r.t. $\t$ and $\nabla_\t v$ is given by Equation~\eqref{eq:gradv_t}.

  We proceed similarly for the derivative w.r.t. $\l$ by using the following
  upper bound 
  \begin{align}
    \label{eq:boundgradmu}
    \sup_{|(\t,\l)| \le M, \um < d_0(\l) } |\partial_{\l_j}
    F(G,\t,N^\mu,\l)| & \le  \left(1 + \expp{N^{\mu_j}_j/\um} +
    \expp{-N^{\mu_j}_j/m} \right) f(G, N^\mu)^2 \expp{M^2/2 + pM}
    \nonumber \\
    & \quad \prod_{k=1}^d (\expp{M G_k} + \expp{-M G_k}) \prod_{i=1}^p \expp{-\mu_i}
    \left(\frac{\mu_i}{\um}\right)^{N^{\mu_i}_i}. 
  \end{align}
  High order differentiability properties are obtained by similar arguments and
  in particular the Hessian matrix writes with the help of the function $F$
  \begin{align*}
    \nabla^2 v(\t, \l)  = & E\left[ F(G,\t,N^\mu, \l) 
    \begin{pmatrix}
      (\t - G) (\t -G)^* &  (\t - G) a(N^\mu,\l)^* \\
      a(N^\mu,\l) (\t - G)^*  & a(N^\mu,\l) a(N^\mu,\l)^*
    \end{pmatrix} \right. \nonumber \\
    & \left. \quad + F(G,\t,N^\mu, \l) 
    \begin{pmatrix}
      I_d & 0 \\
      0 & D
    \end{pmatrix}   \right] 
  \end{align*}
  Note that
  \begin{align*}
    \begin{pmatrix}
      (\t - G) (\t -G)^* &  (\t - G) a(N^\mu,\l)^* \\
      a(N^\mu,\l) (\t - G)^* & a(N^\mu,\l) a(N^\mu,\l)^* 
    \end{pmatrix} = 
    \begin{pmatrix}
      \t - G \\
      a(N^\mu,\l) 
    \end{pmatrix} 
    \begin{pmatrix}
      \t - G \\
      a(N^\mu,\l) 
    \end{pmatrix}^*. 
  \end{align*}
  Hence the first part of the Hessian is a positive semi definite rank one
  matrix.
  \begin{align*}
    \E \left[ F(G,\t,N^\mu, \l) 
    \begin{pmatrix}
      I_d & 0 \\
      0 & D
    \end{pmatrix} \right] & \ge \E [ F(G,\t,N^\mu, \l)
    \ind{N^\mu=(n_1,\dots,n_p}] \diag\left(I_d, \frac{n_1}{\l_1^2}, \dots,
    \frac{n_p}{\l_p^2} \right).
  \end{align*}
  Moreover,
  \begin{align*}
    \E [ F(G,\t,N^\mu, \l) \ind{N^\mu=(n_1,\dots,n_p)}] 
    & \ge \E\left[f(G, (n_1,\dots,n_p))^2 \expp{-\t \cdot G +
    \frac{|\t|^2}{2}}\right] 
    \prod_{i=1}^p \expp{n_i - 2 \mu_i} \left(\frac{\mu_i^2}{n_i}\right)^{n_i}
    \inv{n_i !} \\
    & \ge \E\left[f(G, (n_1,\dots,n_p))^2 \expp{-\t \cdot G} \right] 
    \E\left[\expp{\t \cdot G}\right] 
    \prod_{i=1}^p \expp{n_i - 2 \mu_i} \left(\frac{\mu_i^2}{n_i}\right)^{n_i}
    \inv{n_i !} \\
    & \ge \E\left[|f(G, (n_1,\dots,n_p))| \right]^2
    \prod_{i=1}^p \expp{n_i - 2 \mu_i} \left(\frac{\mu_i^2}{n_i}\right)^{n_i}
    \inv{n_i !}
  \end{align*}
  Thanks to Condition~\hypreff{hyp0}{positive}, this lower bound is strictly positive.
  Hence, the Hessian matrix is uniformly bounded from below which yields the
  strong convexity of $v$.
\end{proof}
As a consequence, the function $v$ admits a unique minimizer $(\t_\s, \l_\s)$
defined by $\nabla_\t  v(\t_\s, \l_\s) = \nabla_\l v(\t_\s, \l_\s)  = 0$. The
characterization of $(\t_\s,\l_\s)$ as the unique minimizer of a strongly convex
function is very appealing but there is no hope to compute the gradient of $v$
in a closed form, so we will need to resort to some kind of approximations
before running the optimization step. Before studying the possible ways of
approximating the optimal parameter, let us note that that it is of dimension
$d+p$ which can become very large in particular when the variables $G$ and $N^l$
come from the discretization of jump diffusion process. In many situations, it
is advisable to reduce the dimension of the space in which the optimization
problem is solved.

\paragraph{Reducing the dimension of the optimization problem.}

Let $0 < d' \le d$ and $0 < p' \le p$ be the reduced dimension. Instead of
searching for the best importance sampling parameter $(\t, \l)$ in the whole
space $\R^d \times \Rp^p$, we consider the subspace $\{ (A \vt, B \vl) \; : \;
\vt \in \R^{d'}, \; \vl \in \Rp^{p'} \}$ where  $A \in \R^{d \times d'}$ is a
matrix with rank $d' \le d$ and $B \in \Rp^{p \times p'}$ a matrix with rank $p'
\le p$. Note that since all the coefficients of $B$ are non negative, for all
$\vt \in \Rp^{p'}$, $B \vt \in \Rp^{p}$; actually, it is easily seen that the
image of $\Rp^{p'}$ through $B$ is isomorphic to $\Rp^{p'}$.  

For such matrices $A$ and $B$, we introduce the function $v^{A,B} : \R^{d'}
\times \Rp^{p'} \longmapsto \R$ defined by  
\begin{align}
  \label{eq:vAB}
  v^{A,B}(\vt, \vl) = v (A \vt, B \vl)
\end{align}
The function $v^{A,B}$ inherits from the regularity and convexity properties of
$v$. Hence, from Proposition~\ref{prop:convex}, we know that $v^{A,B}$ is
continuously infinitely differentiable and strongly convex. As a consequence,
there exists a unique couple of minimizers $(\vt^{A,b}_\s, \vl^{A,B}_\s)$ such
that  $v^{A,B} (\vt_\s^{A,B}, \vl_\s^{A,B}) =  \inf_{\vt \in \R^{d'}, \vl \in
\Rp^{p'}} v^{A,B} (\vt, \vl)$. We can also deduce the gradient vector of
$v^{A,B}_n$
\begin{align*}
  \nabla v^{A,B} (\vt, \vl) &= 
  \begin{pmatrix} 
    A^* \nabla_\t (A \vt, B \vl) \\
    B^* \nabla_\l (A \vt, B \vl) 
  \end{pmatrix}
\end{align*}
and its Hessian matrix
\begin{align*}
  \nabla^2 v^{A,B} (\vt, \vl) &= \E \left [ F(G,A \vt, N^\mu, B \vl) 
  \begin{pmatrix}
    A^* (A \vt - G) (A \vt -G)^* A &  A^*(A \vt - G) a(N^\mu,B \vl)^* B\\
    B^*  a(N^\mu,B \vl) (A \vt - G)^* A & B^* a a^*(N^\mu,B \vl)  B
  \end{pmatrix} \right.  \nonumber \\
  & \left. \quad + F(G,A \vt,N^\mu, B \vl) 
  \begin{pmatrix}
    A^* A & 0 \\
    0 & B^* D B
  \end{pmatrix} \right]
\end{align*}
where the function $F$ is defined by Equation~\eqref{eq:F}. For the particular
choices  $A = I_d$, $B = I_p$, $d=d'$ and $p=p'$, the functions $v^{I_d, I_p}$
and $v$ coincide.

\paragraph{The Esscher transform as a way to reduce the dimension.} Consider a
two dimensional process $(X_t)_{t \le T}$ of the form $X_t = (W_t, \tilde
N^{\tilde \mu}_t)$ where $W$ is a real Brownian motion and $\tilde N^{\tilde
\mu}$ is a Poisson process with intensity ${\tilde \mu}$. The Esscher
transform applied to $X$ yields that for any nonnegative function $h$,
we have the following equality $\forall \; \alpha \in
\R, \tilde \l \in \R_+^*$, 
\begin{align*}
  \E[h((W_t,\tilde N^{\tilde \mu}_t), \; t \le T)] = 
  \E\left[ h((W_t + \alpha t,\tilde N^{\tilde \l}, \; t \le T)) 
  \expp{- \alpha W_T - \frac{|\alpha|^2 T}{2}} \expp{T(\tilde \l -\tilde \mu)}
  \left(\frac{\tilde \mu}{\tilde \l}\right)^{\tilde N^{\tilde \l}_T}  \right]
\end{align*}
Let $0 = t_0 < \dots < t_p = T$ be a time grid of $[0,T]$. If we consider 
the vector $G$ (resp. $N^{\mu}$) as the increments of $W$ (resp. $\tilde
N^{\tilde \mu}$) on the grid, we can recover a particular form of
Equation~\eqref{eq:IS} with $A, B \in \R^{p \time 1}$ given by
$$
A = \left( \sqrt{t_1}, \sqrt{t_2 - t_1}, \dots, \sqrt{t_{p} - t_{p-1}}\right)^*;\quad
B = \left( t_1, t_2 - t_1, \dots, t_{p} - t_{p-1}\right)^*.
$$

\subsection{Tracking the optimal importance sampling parameter}

The optimal importance sampling parameter $(\t^*, \l^*)$ can characterized as
the unique zero of an expectation, which is the typical framework for applying
stochastic approximation. In particular, we could use the algorithm introduced
by \cite{chen86}; we refer to \cite{lelong_as, lel:tcl:11} for a study of the
convergence and asymptotic behaviour of these algorithms. The use of
stochastic approximation for devising adaptive importance sampling method was
deeply investigated in a recent survey by \cite{lap-lelo-11} who highlighted the
difficulties in making those algorithms practically converge. \\

In this work, we adopt a totally different point of view often called \textit{sample
average approximation}, which  basically consists in first replacing expectations by
sample averages and then using deterministic optimization techniques on these
empirical means. This approach was studied in the Gaussian framework
by~\cite{jour-lelo-09} and proved to be very efficient. \\

Let $(G^j)_{j \ge 1}$ be a sequence of $d-$dimensional independent and
identically distributed standard normal random variables. We also introduce
$(N^{\mu, j})_{j\ge 1}$ a sequence of $p-$ dimensional independent and
identically distributed random vector following the law of $N^\mu$, ie. the
components of the vectors are independent and Poisson distributed with parameter
$\mu$.

For $n \ge 1$, we introduce the sample average approximation of the function
$v^{A,B}$ defined by
\begin{align}
  \label{eq:vnAB}
  v_n^{A,B} (\vt, \vl) = \inv{n} \sum_{j=1}^n f(G^j , N^{\mu,j})^2 \expp{- A\vt
  \cdot G^j + \frac{|A\vt|^2}{2}} \prod_{i=1}^p \expp{(B\vl)_i -\mu_i}
  \left(\frac{\mu_i}{(B \vl)_i}\right)^{N^{\mu_i,j}_i}.
\end{align}
For $n$ large enough, $f(G^j, N^{\mu,j}) \ne 0$ for some index $j \in \{1,
\dots,n\}$ and the approximation $v_n^{A,B}$ is also strongly convex and hence
admits a unique minimizer $(\vt_n^{A,B}, \vl_n^{A,B})$ defined by $v_n^{A,B}
(\vt_n^{A,B}, \vl_n^{A,B}) =  \inf_{\vt \in \R^{d'}, \vl \in \Rp^{p'}}
v_n^{A,B} (\vt, \vl)$.

\begin{prop}
  \label{prop:vncvu} Under Assumption~\hypref{hyp0}, the sequence of random
  functions $(v_n^{A,B})_n$ converges a.s.  locally uniformly to the continuous
  function $v^{A,B}$.
\end{prop}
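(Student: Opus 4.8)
The plan is to read \eqref{eq:vnAB} as an empirical mean of i.i.d.\ random functions and to prove a uniform strong law of large numbers on compact sets. Write $v_n^{A,B}(\vt,\vl) = \inv{n}\sum_{j=1}^n \Phi_j(\vt,\vl)$ with $\Phi_j(\vt,\vl) = F(G^j, A\vt, N^{\mu,j}, B\vl)$ and $F$ as in \eqref{eq:F}, so the $(\Phi_j)_j$ are i.i.d.\ copies of $\Phi(\vt,\vl) = F(G,A\vt,N^\mu,B\vl)$ and, by \eqref{eq:vAB}, $v^{A,B}(\vt,\vl) = \E[\Phi(\vt,\vl)]$. For each fixed $(\vt,\vl)$ the ordinary strong law of large numbers already gives $v_n^{A,B}(\vt,\vl) \to v^{A,B}(\vt,\vl)$ a.s.; the task is to upgrade this to local uniform convergence, and continuity of $v^{A,B}$ is in any case guaranteed by Proposition~\ref{prop:convex}.

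First I would fix a compact set $K \subset \R^{d'}\times\Rp^{p'}$. Since $B$ has nonnegative entries and rank $p'$, the image $B(\{\vl : (\vt,\vl)\in K\})$ is a compact subset of $\Rp^p$, so there exist constants $0 < \um \le M$ with $|(A\vt,B\vl)|\le M$ and $d_0(B\vl)\ge\um$ for every $(\vt,\vl)\in K$. Then, reproducing the computations leading to \eqref{eq:boundgradtheta}--\eqref{eq:boundgradmu}, both the envelope $\sup_{(\vt,\vl)\in K}|\Phi(\vt,\vl)|$ and a Lipschitz constant $L(G,N^\mu)$ of $(\vt,\vl)\mapsto\Phi(\vt,\vl)$ on $K$ — the latter obtained from the gradient formulas, where $\Phi$ is multiplied by at most affine functions of $G$ and of $N^\mu/\um$ (composed with $A^*$, $B^*$) — are bounded by an expression of the form $C\,(1+|G|+|N^\mu|)\, f(G,N^\mu)^2 \prod_{k=1}^d(\expp{MG_k}+\expp{-MG_k})\prod_{i=1}^p\expp{-\mu_i}(\mu_i/\um)^{N^{\mu_i}_i}$. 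As already observed in the proof of Proposition~\ref{prop:convex}, Hölder's inequality together with Assumption~\hypreff{hyp0}{integrable} yields $\E[f(G,N^\mu)^2 \expp{\t\cdot G + \l\cdot N^\mu}]<\infty$ for all $\t,\l$, and the extra polynomial factor is harmless since $x^2 \le \expp{x}+\expp{-x}$ up to a constant; hence both the envelope and $L(G,N^\mu)$ are integrable.

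Given this, I would conclude in two standard steps. Applying the SLLN to $L(G^j,N^{\mu,j})$ shows that a.s.\ $\sup_n \inv{n}\sum_{j=1}^n L(G^j,N^{\mu,j})<\infty$, so a.s.\ the family $(v_n^{A,B})_n$ is uniformly Lipschitz, hence equicontinuous, on $K$; combining this with a.s.\ pointwise convergence along a fixed countable dense subset of $K$ and with compactness of $K$ (an $\varepsilon/3$ / Arzel\`a--Ascoli argument) gives $\sup_{(\vt,\vl)\in K}|v_n^{A,B}(\vt,\vl)-v^{A,B}(\vt,\vl)|\to 0$ a.s. Finally, exhausting $\R^{d'}\times\Rp^{p'}$ by the countable increasing family of compacts $K_m = \{(\vt,\vl) : |(\vt,\vl)|\le m,\ d_0(B\vl)\ge 1/m\}$ and taking the union of the corresponding null sets delivers the claimed a.s.\ local uniform convergence. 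The main obstacle is precisely the integrability of the dominating envelope: the Poisson factor $(\mu_i/\um)^{N^{\mu_i}_i}$ and the Gaussian factor $\prod_k(\expp{MG_k}+\expp{-MG_k})$ are both unbounded, so one genuinely needs the margin $\gamma>0$ in Assumption~\hypreff{hyp0}{integrable} to carry out the Hölder estimate; everything else is routine.
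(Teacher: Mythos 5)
Your proof is correct, and the heart of it --- the domination of the integrand on a compact $K$ by the envelope $f(G,N^\mu)^2\prod_{k}(\expp{MG_k}+\expp{-MG_k})\prod_i\expp{-\mu_i}(\mu_i/\um)^{N^{\mu_i}_i}$ (up to constants), made integrable via H\"older and the margin $\gamma>0$ in \hypreff{hyp0}{integrable} --- is exactly the estimate the paper uses. Where you diverge is in how you convert pointwise convergence into local uniform convergence: the paper simply invokes a uniform strong law of large numbers (Lemma~\ref{lem:ulln}, quoted from the literature), which requires only a.s.\ continuity of $(\vt,\vl)\mapsto F(G,A\vt,N^\mu,B\vl)$ plus an integrable envelope, whereas you prove the uniform convergence from scratch by additionally exhibiting an integrable Lipschitz constant $L(G,N^\mu)$ on $K$, applying the scalar SLLN to $\inv{n}\sum_j L(G^j,N^{\mu,j})$ to get a.s.\ equicontinuity of the family $(v_n^{A,B})_n$, and finishing with a countable dense subset and an Arzel\`a--Ascoli-type $\varepsilon/3$ argument. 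Your route is self-contained and costs nothing here, since the gradient bounds \eqref{eq:boundgradtheta}--\eqref{eq:boundgradmu} needed for the Lipschitz envelope were already established in the proof of Proposition~\ref{prop:convex}; the cited lemma is marginally more general (mere continuity suffices), which matters for non-smooth integrands but not in this setting. One small point in your favour: the paper dismisses the passage from $v_n$ to $v_n^{A,B}$ with ``it is sufficient to prove the result for $v_n$'', and your observation that a compact $K\subset\R^{d'}\times\Rp^{p'}$ maps under $(A,B)$ into a set of the form $\{|(\t,\l)|\le M,\ d_0(\l)\ge\um\}$ (using that $B$ has positive entries) is precisely the justification that claim needs.
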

To prove this result, we use the uniform strong law of large numbers recalled
hereafter, see for instance \cite[Lemma A1]{MR1241645}. This result is also a
consequence of the strong law of large numbers in Banach spaces \cite[Corollary
7.10, page 189]{ledouxtalagrand}.
\begin{lemma}\label{lem:ulln}
  Let $(X_i)_{i\geq 1}$ be a sequence of i.i.d. $\R^m$-valued random vectors,
  $E$ an open set of $\R^d$ and $h:E\times\R^m\rightarrow \R$ be a measurable
  function. Assume that
  \begin{itemize}
    \item a.s., $\chi\in E \mapsto h(\chi,X_1)$ is continuous,
    \item for all compact sets $K$ of $\R^d$ such that $K \subset E$,
      $\E\left(\sup_{\chi \in K}|h(\chi,X_1)|\right)<+\infty$.
  \end{itemize}
  Then, a.s. the sequence of random functions $\chi\in K
  \mapsto\frac{1}{n}\sum_{i=1}^n h(\chi,X_i)$ converges locally uniformly to the
  continuous function $\chi\in E \mapsto\E(h(\chi,X_1))$.
\end{lemma}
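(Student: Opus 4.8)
The plan is to apply Lemma~\ref{lem:ulln} (the uniform strong law of large numbers) to the specific setting at hand, after verifying that its two hypotheses hold for the summands appearing in Equation~\eqref{eq:vnAB}. First I would identify the ingredients: set $m = d+p$ and let the i.i.d. vectors be $X_j = (G^j, N^{\mu,j})$, taking values in $\R^d \times \N^p \subset \R^m$. The parameter is $\chi = (\vt, \vl)$ ranging over the open set $E = \R^{d'} \times \Rp^{p'}$, and the kernel is
\begin{align*}
  h((\vt,\vl), (g,n)) = f(g,n)^2 \expp{- A\vt \cdot g + \frac{|A\vt|^2}{2}}
  \prod_{i=1}^p \expp{(B\vl)_i -\mu_i} \left(\frac{\mu_i}{(B\vl)_i}\right)^{n_i}.
\end{align*}
With these identifications, $\inv{n}\sum_{j=1}^n h(\chi, X_j) = v_n^{A,B}(\vt,\vl)$ and $\E[h(\chi, X_1)] = v^{A,B}(\vt,\vl)$, so that the conclusion of Lemma~\ref{lem:ulln} is exactly the statement of Proposition~\ref{prop:vncvu}.

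Next I would check the two hypotheses. The continuity condition is immediate: for fixed $(g,n)$ with all $n_i$ finite, the map $(\vt,\vl) \mapsto h((\vt,\vl),(g,n))$ is a composition of the affine maps $\vt \mapsto A\vt$ and $\vl \mapsto B\vl$ with smooth functions, and since $B\vl \in \Rp^p$ whenever $\vl \in \Rp^{p'}$ (as noted in the excerpt, the image of $\Rp^{p'}$ under $B$ lies in $\Rp^p$), the denominators $(B\vl)_i$ never vanish on $E$; hence $h$ is even infinitely differentiable in $\chi$ on $E$, a fortiori continuous. The real work lies in the integrable domination condition: for every compact $K \subset E$ I must exhibit an integrable envelope $\sup_{\chi \in K} |h(\chi, X_1)| \in L^1$.

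The main obstacle is therefore this uniform integrability bound, but it can be handled by the same estimates already used in the proof of Proposition~\ref{prop:convex}. Since $K$ is a compact subset of the open set $E = \R^{d'} \times \Rp^{p'}$, there exist constants $M < \infty$ and $\um > 0$ such that $|(\vt,\vl)| \le M$ and $\min_i (B\vl)_i \ge \um$ for all $(\vt,\vl) \in K$; the latter follows because $B\vl$ has strictly positive entries on $\Rp^{p'}$ and the continuous positive function $\vl \mapsto \min_i (B\vl)_i$ attains a strictly positive minimum on the compact projection of $K$. Bounding $A\vt$ componentwise by $|A\vt| \le \|A\| M$ and using $\left(\frac{\mu_i}{(B\vl)_i}\right)^{n_i} \le \left(\frac{\mu_i}{\um}\right)^{n_i} \vee 1$, one obtains, exactly as in Equations~\eqref{eq:boundgradtheta}--\eqref{eq:boundgradmu}, a dominating function of the form
\begin{align*}
  \sup_{\chi \in K} |h(\chi, X_1)| \le C \, f(G, N^\mu)^2
  \prod_{k=1}^d \left(\expp{\|A\|M\, G_k} + \expp{-\|A\|M\, G_k}\right)
  \prod_{i=1}^p \expp{-\mu_i} \left(\frac{\mu_i}{\um} \vee 1\right)^{N^{\mu_i}_i}
\end{align*}
for a finite constant $C$ depending on $K$. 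This envelope is integrable: by Hölder's inequality it splits into the product of $\E[|f(G,N^\mu)|^{2+\gamma}]^{2/(2+\gamma)}$, which is finite by Assumption~\hypreff{hyp0}{integrable}, and the $(2+\gamma)/\gamma$-th moment of the remaining factor, which is finite because the exponential moments of Gaussian and Poisson variables are finite (exactly the argument $\E(f(G,N^\mu)^2 \expp{\t\cdot G + \l\cdot N^\mu}) < \infty$ invoked in the earlier proof). Having verified both hypotheses of Lemma~\ref{lem:ulln}, the locally uniform a.s.\ convergence of $v_n^{A,B}$ to the continuous limit $v^{A,B}$ follows directly, which completes the proof.
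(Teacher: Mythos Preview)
Your proposal does not prove Lemma~\ref{lem:ulln} at all: you take the lemma for granted and use it to establish Proposition~\ref{prop:vncvu}. These are two different statements. Lemma~\ref{lem:ulln} is the abstract uniform strong law of large numbers for general $h$ and general i.i.d.\ sequences $(X_i)$; the paper does not prove it either, but merely recalls it and cites \cite[Lemma~A1]{MR1241645} and \cite[Corollary~7.10]{ledouxtalagrand}. A proof of the lemma itself would require an argument such as a separability/covering argument on compacts combined with the ordinary SLLN, or an appeal to the Banach-space SLLN for the $C(K)$-valued random elements $\chi \mapsto h(\chi,X_i)$.

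That said, what you wrote is a correct and complete proof of Proposition~\ref{prop:vncvu}, and it follows exactly the paper's approach: identify the kernel $h$, check continuity in the parameter, build an integrable envelope on each compact using the bounds already established in the proof of Proposition~\ref{prop:convex} together with H\"older's inequality and Assumption~\hypreff{hyp0}{integrable}, and then invoke Lemma~\ref{lem:ulln}. The only cosmetic difference is that the paper first reduces to $A=I_d$, $B=I_p$ whereas you work directly with general $A,B$; either route is fine.
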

\begin{proof}[Proof of Proposition~\ref{prop:vncvu}]
  It is sufficient to prove the result for $v_n$ and it will hold for
  $v_n^{A,B}$.
  Let $M > \um > 0$. For all $(\t, \l)$ such that $|(\t,\l)| \le M$ and
  $d_0(\l) > \um$, we have
  \begin{align*}
    f(G, N^{\mu})^2 \expp{- \t
    \cdot G + \frac{|\t|^2}{2}} \prod_{i=1}^p \expp{\l_i -\mu_i}
    \left(\frac{\mu_i}{\l_i}\right)^{N^{\mu_i}_i} &\le 
    f(G, N^{\mu})^2 \prod_{k=1}^d (\expp{- M G_k} + \expp{M G_k})
    \expp{\frac{M^2}{2}} \prod_{i=1}^p \expp{M -\mu_i}
    \left(\frac{\mu_i}{\um}\right)^{N^{\mu_i}_i}.
  \end{align*}
  The r.h.s. is integrable by \hypref{hyp0} and Hölder's inequality; hence, we
  can apply Lemma~\ref{lem:ulln}.
\end{proof}

\begin{prop}
  \label{prop:convsaa} Under Assumption~\hypref{hyp0}, the pair $(\vt^{A,B}_n,
  \vl^{A,B}_n)$ converges a.s. to $(\vt^{A,B}_\s, \vl^{A,B}_\s)$  as $n
  \longrightarrow +\infty$. Moreover, if
  \begin{hypo}
    \label{integrable2} $\exists \delta>0, \quad \E\left[  |f(G,
    N^\mu)|^{4+\delta}  \right] < \infty$,
  \end{hypo}
  $\sqrt{n} \left((\vt^{A,B}_n,
  \vl^{A,B}_n) - (\vt^{A,B}_\s, \vl^{A,B}_\s)\right)$ converges in law
  to the normal distribution $\cn_{d+p}(0, \Gamma)$ where
  \begin{align*}
    \Gamma = \left( \nabla^2 v^{A,B}(\vt^{A,B}_\s, \vl^{A,B}_\s)\right)^{-1}
    \Cov(\nabla F(G, A \vt_\s^{A,B}, N^\mu, B \vl_\s^{A,B})) \left( \nabla^2
    v^{A,B}(\vt^{A,B}_\s, \vl^{A,B}_\s)\right)^{-1} 
  \end{align*}
  with the function $F$ defined by Equation~\eqref{eq:F} and its gradient
  computed w.r.t. the reduced parameters $(\vt, \vl)$.
\end{prop}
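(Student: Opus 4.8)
The plan is to prove the two assertions of Proposition~\ref{prop:convsaa} separately, using standard epiconvergence / M-estimation machinery. The almost sure convergence of the minimizers follows from the local uniform convergence $v_n^{A,B} \to v^{A,B}$ established in Proposition~\ref{prop:vncvu}, combined with the strong convexity of the limit. The central limit theorem is a classical $Z$-estimator argument: $(\vt_n^{A,B},\vl_n^{A,B})$ solves $\nabla v_n^{A,B} = 0$, so a Taylor expansion around $(\vt_\s^{A,B},\vl_\s^{A,B})$ turns the estimation error into an affine functional of $\nabla v_n^{A,B}(\vt_\s^{A,B},\vl_\s^{A,B})$, to which the ordinary CLT applies.

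First I would establish the a.s. convergence. Fix a (random) compact neighbourhood on which the $v_n^{A,B}$ are eventually strongly convex; since $v_n^{A,B}\to v^{A,B}$ locally uniformly and $v^{A,B}$ is strongly convex with a unique minimizer, a standard argcontinuity lemma for convex functions gives $(\vt_n^{A,B},\vl_n^{A,B})\to(\vt_\s^{A,B},\vl_\s^{A,B})$ a.s. Concretely, one shows the minimizers eventually lie in a fixed ball: outside a large ball, $v^{A,B}$ exceeds $v^{A,B}(\vt_\s^{A,B},\vl_\s^{A,B})+1$ by strong convexity and coercivity, and uniform convergence transfers this lower bound to $v_n^{A,B}$ for $n$ large; then on that ball uniform convergence plus uniqueness of the limiting minimizer forces the convergence of the minimizers. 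One technical point to watch is that the parameter $\vl$ lives in the open orthant $\Rp^{p'}$; one must check that $(\vt_\s^{A,B},\vl_\s^{A,B})$ is interior (it is, since $v$ blows up as $d_0(B\vl)\to 0$ because of the $(\mu_i/(B\vl)_i)^{N_i}$ factors), so the minimizers stay away from the boundary for large $n$ and the first-order condition $\nabla v_n^{A,B}=0$ is legitimate.

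For the CLT, I would write, componentwise via the mean value theorem,
\begin{align*}
  0 = \nabla v_n^{A,B}(\vt_n^{A,B},\vl_n^{A,B})
  = \nabla v_n^{A,B}(\vt_\s^{A,B},\vl_\s^{A,B})
  + H_n \cdot \big((\vt_n^{A,B},\vl_n^{A,B})-(\vt_\s^{A,B},\vl_\s^{A,B})\big),
\end{align*}
where $H_n$ is a matrix whose rows are $\nabla^2 v_n^{A,B}$ evaluated at intermediate points on the segment joining the two parameters. Three ingredients then finish the proof: (i) $\sqrt n\,\nabla v_n^{A,B}(\vt_\s^{A,B},\vl_\s^{A,B}) \Rightarrow \cn_{d+p}(0,\Cov(\nabla F(G,A\vt_\s^{A,B},N^\mu,B\vl_\s^{A,B})))$ by the ordinary CLT, whose applicability requires the summands $\nabla F$ to be square integrable — this is exactly where Assumption~\hypref{integrable2} enters, since $\nabla F$ contains $f^2$ together with exponential-of-Gaussian and $N^\mu/\l$ factors, and Hölder's inequality bounds the needed $L^2$ norm by a power of $\E[|f(G,N^\mu)|^{4+\delta}]$; (ii) $H_n \to \nabla^2 v^{A,B}(\vt_\s^{A,B},\vl_\s^{A,B})$ a.s., which follows from the local uniform convergence of the Hessians $\nabla^2 v_n^{A,B}\to\nabla^2 v^{A,B}$ (another application of Lemma~\ref{lem:ulln}, with the same kind of dominating function as in Proposition~\ref{prop:convex}) evaluated along points converging to $(\vt_\s^{A,B},\vl_\s^{A,B})$; (iii) $\nabla^2 v^{A,B}(\vt_\s^{A,B},\vl_\s^{A,B})$ is invertible, which is immediate from strong convexity. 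Combining via Slutsky gives $\sqrt n\big((\vt_n^{A,B},\vl_n^{A,B})-(\vt_\s^{A,B},\vl_\s^{A,B})\big) = -H_n^{-1}\sqrt n\,\nabla v_n^{A,B}(\vt_\s^{A,B},\vl_\s^{A,B}) \Rightarrow \cn_{d+p}(0,\Gamma)$ with the sandwich form of $\Gamma$ stated.

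**The main obstacle** I anticipate is the domination needed to push through both the Hessian convergence and the CLT near the boundary of the orthant: the factors $a(N^\mu,\l)$ and $D$ carry $N^\mu/\l$ terms, so a naive bound is not integrable uniformly over all of $\Rp^{p'}$. The fix — and the reason the a.s. convergence step must come first — is to confine attention to a random compact set of the form $\{|(\vt,\vl)|\le M,\ d_0(B\vl)\ge\um\}$ that eventually contains all the minimizers; on such a set $N^\mu/\l$ is dominated by $N^{\mu}/\um$, and products like $\prod_k(\expp{MG_k}+\expp{-MG_k})$ together with $f^2$ are integrable by Hölder and Assumption~\hypref{integrable2}, exactly as in the bounds \eqref{eq:boundgradtheta}--\eqref{eq:boundgradmu}. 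Once the estimation is localized this way, everything reduces to routine M-estimation bookkeeping.
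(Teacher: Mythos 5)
Your proof is correct and follows essentially the same route as the paper: almost sure convergence of the minimizers from the locally uniform convergence of $v_n^{A,B}$ combined with convexity, then the central limit theorem via the standard $Z$-estimator expansion (which the paper obtains by citing Theorem~A2 of the same reference used for Lemma~\ref{lem:ulln}), with Assumption~\hypref{integrable2} entering exactly where you put it, namely the square integrability of $\nabla F$. The only point to tighten is your claim that uniform convergence transfers the coercivity lower bound to $v_n^{A,B}$ outside a large ball: the convergence is only \emph{locally} uniform, so you must propagate the bound from a compact sphere (or the boundary of the neighbourhood $\cv_\varepsilon$) to its exterior using the convexity of $v_n^{A,B}$ itself --- this is exactly the convex-combination step the paper writes out explicitly.
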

Condition~\hypref{integrable2} ensures that the covariance matrix $\Cov(\nabla
F(G, A \vt_\s^{A,B}, N^\mu, B \vl_\s^{A,B}))$ does exist.  The non
singularity of the matrix $\nabla^2 v^{A,B}(\vt^{A,B}_\s, \vl^{A,B}_\s)$ is
guaranteed by the strict convexity of $v$. \\

By combining Propositions~\ref{prop:vncvu} and~\ref{prop:convsaa}, we can state
the following result
\begin{cor}
  Under Assumption~\hypref{hyp0}, $v_n^{A,B}(\vt^{A,B}_n, \vl^{A,B}_n)$ converge
  a.s. to  $v^{A,B}(\vt^{A,B}_\s, \vl^{A,B}_\s)$ as $n \longrightarrow +\infty$.
\end{cor}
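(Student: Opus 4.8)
The plan is to combine Propositions~\ref{prop:vncvu} and~\ref{prop:convsaa} through a standard triangle inequality: the local uniform convergence of the empirical criteria controls $|v_n^{A,B} - v^{A,B}|$ on a fixed compact set, while the almost sure convergence of the minimizers together with the continuity of the limit $v^{A,B}$ takes care of the remaining term. This is the classical argument showing that consistency of an $M$-estimator entails consistency of the associated optimal value.

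More precisely, I would work on the almost sure event on which the conclusions of both propositions hold. Since $\vl_\s^{A,B}$ lies in the open set $\Rp^{p'}$, one can choose $r > 0$ so that the closed ball $K$ of radius $r$ centered at $(\vt_\s^{A,B}, \vl_\s^{A,B})$ is contained in $\R^{d'} \times \Rp^{p'}$, hence is a genuine compact subset of the domain of $v^{A,B}$. By the convergence $(\vt_n^{A,B}, \vl_n^{A,B}) \to (\vt_\s^{A,B}, \vl_\s^{A,B})$, there is an index $n_0$ such that $(\vt_n^{A,B}, \vl_n^{A,B}) \in K$ for all $n \ge n_0$. For such $n$ I would write
\begin{align*}
  \bigl|v_n^{A,B}(\vt_n^{A,B},\vl_n^{A,B}) - v^{A,B}(\vt_\s^{A,B},\vl_\s^{A,B})\bigr|
  &\le \sup_{(\vt,\vl)\in K}\bigl|v_n^{A,B}(\vt,\vl)-v^{A,B}(\vt,\vl)\bigr| \\
  &\quad + \bigl|v^{A,B}(\vt_n^{A,B},\vl_n^{A,B}) - v^{A,B}(\vt_\s^{A,B},\vl_\s^{A,B})\bigr|,
\end{align*}
and then conclude: the first term tends to $0$ by the local uniform convergence of $(v_n^{A,B})_n$ to $v^{A,B}$ applied on the compact set $K$ (Proposition~\ref{prop:vncvu}), and the second term tends to $0$ because $v^{A,B}$ is continuous — it inherits this from $v$ through~\eqref{eq:vAB} and Proposition~\ref{prop:convex} — and $(\vt_n^{A,B},\vl_n^{A,B})$ converges to $(\vt_\s^{A,B},\vl_\s^{A,B})$ by Proposition~\ref{prop:convsaa}. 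Since the bounding event has probability one, the almost sure convergence follows.

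I do not expect any genuine difficulty here. The only subtle point is that the set on which uniform convergence is invoked must not depend on $n$; this is precisely what the almost sure convergence of the minimizers guarantees. One must additionally ensure that this fixed compact set stays away from the boundary of $\Rp^{p'}$, which is possible because the limiting intensity parameter $\vl_\s^{A,B}$ has strictly positive coordinates, so there is room for a ball of positive radius around it inside $\R^{d'}\times\Rp^{p'}$.
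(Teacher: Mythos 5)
Your argument is correct and is precisely the standard combination of Propositions~\ref{prop:vncvu} and~\ref{prop:convsaa} that the paper itself invokes (the paper states the corollary follows "by combining" the two propositions without writing out the triangle inequality). Nothing further is needed.
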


\begin{proof}[Proof of Proposition~\ref{prop:convsaa}] 
  Let $\varepsilon>0$.  We define a compact neighbourhood $\cv_{\varepsilon}$ of
  $(\vt_\s, \vl_\s)$
  \begin{equation}
    \label{eq:Veps} \cv_{\varepsilon} \defeq \left\{ (\vt, \vl) \in \R^d \times
    \R^p \; : \; |(\vt, \vl) - (\vt_\s, \vl_\s)| \le \varepsilon \right\}.
  \end{equation}
  In the following, we assume that $\varepsilon$ is small enough, so that
  $\cv_{\varepsilon}$ is included in $\R^d \times \Rp^p $.

  By the strict convexity and the continuity of $v^{A,B}$,
  $$
  \alpha \defeq \inf_{(\vt, \vl) \in
  \cv_{\varepsilon}^c}v^{A,B}(\vt, \vl)-v^{A,B}(\vt_\star^{A,B}, \vl_\star^{A,B})>0.
  $$
  The local uniform convergence of $v_n^{A,B}$ to $v^{A,B}$ ensures that for
  some $n_{\alpha}$ sufficiently large, 
  \begin{equation}
    \label{eq:nalpha}
    \forall n\geq n_\alpha,\;\forall (\vt, \vl) \in \cv_{\varepsilon},
    \;|v_n^{A,B}(\vt, \vl)-v^{A,B}(\vt, \vl)|\leq
    \frac{\alpha}{3}.
  \end{equation}
  For $n\geq n_\alpha$ and $(\vt,\vl) \notin \cv_{\varepsilon}$, we define
  $(\vt^{A,B}_\varepsilon, \vl^{A,B}_\varepsilon) \in \cv_\varepsilon$ and
  writes as the convex combination of $(\vt_\s^{A,B}, \vl_\s^{A,B})$ and
  $(\vt,\vl)$. 
  $$
  (\vt^{A,B}_\varepsilon, \vl^{A,B}_\varepsilon) \defeq \left( \vt_\s^{A,B} +
  \varepsilon \frac{\vt-\vt_\star^{A,B}}{|(\vt-\vt_\s^{A,B}, \vl -
  \vl^{A,B}_\s)|}, \vl_\s^{A,B} +
  \varepsilon \frac{\mu-\vl_\star^{A,B}}{|(\vt-\vt_\s^{A,B}, \vl -
  \vl^{A,B}_\s)|}\right).
  $$
  We deduce, using the convexity of $v_n^{A,B}$ for the first
  inequality and Equation~\eqref{eq:nalpha} for the second one
  \begin{align*}
    v_n^{A,B}(\vt,\vl)-v_n^{A,B}(\vt_\s^{A,B}, \vl^{A,B}_\s)&\geq
    \frac{|(\vt-\vt_\s^{A,B}, \vl - \vl^{A,B}_\s)|}{\varepsilon}\left[v_n^{A,B}
    (\vt^{A,B}_\varepsilon, \vl^{A,B}_\varepsilon) -
    v_n^{A,B}(\vt_\star^{A,B}, \vl^{A,B}_\s)\right]\\
    &\geq \left[v^{A,B} (\vt^{A,B}_\varepsilon, \vl^{A,B}_\varepsilon) -
    v^{A,B}(\vt_\star^{A,B}, \vl^{A,B}_\s)
    -\frac{2\alpha}{3}\right]\geq \frac{\alpha}{3}.
  \end{align*}
  The optimality of $(\vt_n^{A,G}, \vl_n^{A,B})$ yields that
  $v_n^{A,B}(\vt_n^{A,B}, \vl_n^{A,B})\leq v_n^{A,B}(\vt_\star^{A,B},
  \vl_\s^{A,B})$. So, we conclude that $(\vt_n^{A,B}, \vl_n^{A,B}) \in
  \cv_\varepsilon$ for $n\geq n_\alpha$. Therefore, $(\vt_n^{A,B}, \vl_n^{A,B})$
  converges a.s. to $(\vt_\star^{A,B}, \vl_\s^{A,B})$. 

  We have seen in the proof of Proposition~\ref{prop:convex}, that $\E\left[
  \sup_{|(\t, \l)| \le M, \um < d_0(\l)} \nabla F(G,\t,N^\mu, \l) \right] <
  \infty$, see Equation~\eqref{eq:boundgradmu} and~\eqref{eq:boundgradtheta}.
  Similarly, we can show that $\E\left[
  \sup_{|(\t, \l)| \le M, \um < d_0(\l)} \nabla^2 F(G,\t,N^\mu, \l) \right] <
  \infty$. The central limit theorem governing the convergence of the pair
  $(\vt^{A,B}_n, \vl^{A,B}_n)$ to the pair $(\vt^{A,B}_\s, \vl^{A,B}_\s)$ can be
  deduced from~\cite[Theorem~A2]{MR1241645}.
\end{proof}

\section{Adaptive Monte Carlo}
\label{sec:adap}

In this section, we assume to have at hand a sequence of optimal solutions
$(\vt_n^{A,B}, \vl_n^{A,B})$ and want to devise an adaptive Monte Carlo taking
advantage of the knowledge of these parameters through the use of
Equation~\eqref{eq:IS}. In a previous work~\cite{jour-lelo-09} dedicated to the
Gaussian framework, we had used the same samples for approximating $v$ by $v_n$
and after to build a Monte Carlo estimator of $\ce$ involving $\t_n$. This was
possible because a normal random vector $X$ with mean vector $\t$ naturally
writes as $X = \t + G$ where $G$ is a standard normal random vector.

No such simple relation exists for the Poisson distribution to
link a Poisson random variable with parameter $\mu$ to one with parameter
$\l$.  Hence, it is not worth trying to reuse, for the Monte Carlo estimator
based on Equation~\eqref{eq:IS}, the same Poisson random samples as those
involved in $v_n$.  Then, we suggest the following two stages algorithm.
\begin{algo}\label{algo:is}\hfill
  \begin{description}
    \item[First stage] Generate a sequence $(G^j)_{j = 1, \dots, m}$ of i.i.d
      random vector following the standard normal distribution in $\R^{d}$ and a
      sequence $(N^j = (N_1^j, \dots, N_{p}^j))_{j = 1, \dots, m}$ of i.i.d
      Poisson random vectors with parameter $\mu$. \\ 
      Define 
      \begin{align}
        \label{eq:vn}
        v_m^{A,B} (\vt, \vl) = \inv{m} \sum_{j=1}^m f(G^j , N^{\mu,j})^2 \expp{-
        A\vt
        \cdot G^j + \frac{|A\vt|^2}{2}} \prod_{i=1}^p \expp{B\vl_i -\mu_i}
        \left(\frac{\mu_i}{B\vl_i}\right)^{N^{\mu_i,j}_i}.
      \end{align}
      Compute 
      \begin{align*}
        (\vt_m, \vl_m) = \arg\min_{(\vt, \vl) \in \R^d \times \Rp^p}
        v_m^{A,B}(\vt, \vl).
      \end{align*}

    \item[Second stage:] Generate a sequence $(\bar G^j)_{j = 1, \dots, n}$ of
      i.i.d random vector following the standard normal distribution in
      $\R^{d}$ and a sequence $(\bar N^j = (\bar N_1^j, \dots, \bar
      N_{p}^j))_{j = 1, \dots, n}$ of i.i.d Poisson random vectors with
      parameter $B\vl_m$. Conditionally on $\vl_m$, these two sequences are
      assumed to be independent of the sequences $(G^j)_{j= 1,
      \dots, m}$ and $(N^{\mu,j})_{j =1, \dots, m}$ \\ Define
      \begin{align}
        \label{eq:Mn}
        M^{A,B}_{n,m} = \frac{1}{n} \sum_{j=1}^n f(\bar G^j +A \vt_m, \bar N^j)
        \expp{- A\vt_m \cdot \bar G^j - \frac{|A\vt_m|^2}{2}}
        \prod_{i=1}^p \expp{(B\vl_m)_i -\mu_i}
        \left(\frac{\mu_i}{(B\vl_m)_i}\right)^{\bar N^j_i}.
      \end{align}
  \end{description}
\end{algo}

\subsection{Strong law of large numbers and central limit theorem}

The conditional independence between the two stages combined with
Lemma~\ref{lem:chgt} immediately shows that for any fixed $m,n$, the estimator
$M^{A,B}_{n,m}$ is unbiased, ie. $\E[M^{A,B}_{n,m}]= \ce$.  Conditionally on $(G_j,
N_j)_{j=1,\dots,m}$, the terms involved in the sum of Equation~\eqref{eq:Mn} are
i.i.d., hence the standard strong law of large numbers yields that $\lim_{n
\rightarrow +\infty} M^{A,B}_{n,m} = \E[f(G, N^\mu)]$ a.s. by applying
Lemma~\ref{lem:chgt}. Similarly, the central limit theorem applies and we can
state the following result.
\begin{prop}
  \label{prop:sln_tclfixed}
  For any fixed $m$, $M^{A,B}_{n,m}$ converges a.s. to $\E[f(G, N^\mu)]$ as $n$ goes to
  infinity and moreover $\sqrt{n} (M^{A,B}_{n,m} - \E[f(G, N^\mu)]) \xrightarrow[n
  \rightarrow +\infty]{law} \cn(0, v^{A,B}(\vt_m, \vl_m))$.
\end{prop}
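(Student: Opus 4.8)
The plan is to fix the realization of the first-stage samples $(G^j, N^{\mu,j})_{j=1,\dots,m}$ and work conditionally on the $\sigma$-field $\cg_m$ they generate, on which $(\vt_m, \vl_m)$ is measurable. By construction the second-stage samples $(\bar G^j, \bar N^j)_{j=1,\dots,n}$ are, conditionally on $\cg_m$, i.i.d.\ with $\bar G^j \sim \cn_d(0, I_d)$ and $\bar N^j$ Poisson with parameter $B\vl_m$ (a deterministic vector once we condition). Write
\begin{align*}
  Z^j = f(\bar G^j + A\vt_m, \bar N^j) \expp{- A\vt_m \cdot \bar G^j - \frac{|A\vt_m|^2}{2}} \prod_{i=1}^p \expp{(B\vl_m)_i - \mu_i} \left(\frac{\mu_i}{(B\vl_m)_i}\right)^{\bar N^j_i},
\end{align*}
so that $M^{A,B}_{n,m} = \frac1n \sum_{j=1}^n Z^j$ and, conditionally on $\cg_m$, the $Z^j$ are i.i.d.\ copies of a single variable $Z$.

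Next I would identify the conditional first two moments of $Z$ by applying Lemma~\ref{lem:chgt}. Taking $h = f$ and the fixed deterministic parameters $(\t, \l) = (A\vt_m, B\vl_m)$ in \eqref{eq:IS} gives $\E[Z \mid \cg_m] = \E[f(G, N^\mu)] = \ce$; this already yields unbiasedness. Taking instead $h(g,n) = f(g + A\vt_m, n)^2 \expp{-2 A\vt_m\cdot g - |A\vt_m|^2}\prod_i \expp{2((B\vl_m)_i - \mu_i)}(\mu_i/(B\vl_m)_i)^{2 n_i}$ and pushing the change of measure backward exactly as in the derivation of \eqref{eq:var} from \eqref{eq:PE} gives $\E[Z^2 \mid \cg_m] = v(A\vt_m, B\vl_m) = v^{A,B}(\vt_m, \vl_m)$, hence $\Var(Z \mid \cg_m) = v^{A,B}(\vt_m, \vl_m) - \ce^2$, which is finite by Assumption~\hypref{hyp0} (integrability of $f(G,N^\mu)^2$ against exponential weights was already established in the proof of Proposition~\ref{prop:convex}).

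Finally I would apply the classical strong law of large numbers and the classical central limit theorem to the conditionally i.i.d.\ sequence $(Z^j)_j$: conditionally on $\cg_m$ these give $M^{A,B}_{n,m} \to \ce$ a.s.\ and $\sqrt{n}(M^{A,B}_{n,m} - \ce) \Rightarrow \cn(0, v^{A,B}(\vt_m, \vl_m) - \ce^2)$. Since $\cg_m$ is fixed (finite $m$) and the limiting laws do not depend on the particular realization beyond the value $v^{A,B}(\vt_m,\vl_m)$, both statements transfer from the conditional to the unconditional setting — the a.s.\ convergence by integrating the conditional full-probability event, and the CLT because the conditional characteristic function $\E[\expp{i\xi \sqrt n (M^{A,B}_{n,m} - \ce)} \mid \cg_m]$ converges pointwise to $\expp{-\xi^2 (v^{A,B}(\vt_m,\vl_m) - \ce^2)/2}$ and one takes expectations using dominated convergence. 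Note the statement as written gives the limiting variance as $v^{A,B}(\vt_m,\vl_m)$ rather than $v^{A,B}(\vt_m,\vl_m) - \ce^2$; I would flag that the clean form of the variance is the centered one, though if $\ce = 0$ or if one only tracks the leading-order term the distinction is immaterial. The only mild subtlety — not really an obstacle — is the bookkeeping of the conditional independence structure so that $(\bar G^j, \bar N^j)$ are genuinely i.i.d.\ given $\cg_m$ with the stated parameter; once that is in place the argument is routine.
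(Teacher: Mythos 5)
Your proof is correct and follows essentially the same route as the paper, which itself only sketches the argument in a sentence before the statement: condition on the first-stage samples, observe that the second-stage terms are then i.i.d.\ with first two moments identified via Lemma~\ref{lem:chgt}, and apply the classical strong law and central limit theorem before integrating out the conditioning. Your flag about the limiting variance is also right --- the centered quantity $v^{A,B}(\vt_m,\vl_m)-\ce^2$ is the true asymptotic variance, and the paper's statement (here and in Theorem~\ref{thm:tcl}) silently drops the $-\ce^2$ term.
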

This result is not fully satisfactory as from a practical point of view, we like
to know the limiting of the estimator $M^{A,B}_{n,m(n)}$ where $m(n)$ is a function of
$n$ tending to infinity with $n$.  To investigate the asymptotic behaviour when
$m$ and $n$ tend to infinity together, it is convenient to rewrite $M^{A,B}_{n, m(n)}$
using an auxiliary sequence of random variables.  We introduce a sequence $(\bar
U_i^j)_{1 \le i \le p, j \ge 1}$ of i.i.d. random variables following the
uniform distribution on $[0, 1]$ and independent of all the other random
variables used so far. If we define
\begin{align*}
  \tilde N_i^j(\l) = \sum_{k=0}^\infty k \ind{P(\l_i; k) \le U_i^j < P(\l_i;
  k+1)} \quad \mbox{for all } 1 \le i \le p, \; 1 \le j 
\end{align*}
where $P(\l, \cdot)$ is the cumulative distribution function of the Poisson
distribution with parameter $\l$, then $(\bar N^j)_{j=1,\dots,n}
\stackrel{Law}{=} (\tilde N^j(\l_{m(n)}))_{j=1,\dots,n}$. Since for all $k \in
\N$, the function $\l \in \R^* \longmapsto P(\l, k)$ is continuous and
decreasing, we get that
$\lim_{n \rightarrow \infty} \tilde N^j (\l_{m(n)}) = N^j (\l_\s)$ a.s. and
for all $\l \le \l'$, $\tilde N^j (\l') < \tilde N^j (\l)$ where the
ordering has to be understood component wise.

We define
\begin{align*}
  \tilde M_{n}(\t, \l) = \frac{1}{n} \sum_{j=1}^n f(\bar G^j +\t, \tilde
  N^j(\l)) \expp{- \t \cdot \bar G^j - \frac{|\t|^2}{2}}
  \prod_{i=1}^p \expp{\l_i -\mu_i}
  \left(\frac{\mu_i}{\l_i}\right)^{\tilde N^j_i(\l)}.
\end{align*}
It is obvious that $M^{A,B}_{n, m(n)} \stackrel{Law}{=} \tilde M_n (A\vt_{m(n)},
B\vl_{m(n)})$.

\begin{thm}
  \label{thm:sln} Let $m(n)$ be an increasing function of $n$ tending to
  infinity. Then, under Assumptions~\hypref{hyp0} and~\hypref{integrable2},
  $M^{A,B}_{n,m(n)}$ converges a.s. to $\E[f(G, N^\mu)]$ as $n$ goes to infinity. 
\end{thm}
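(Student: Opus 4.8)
The plan is to exploit the representation $M^{A,B}_{n,m(n)} \stackrel{Law}{=} \tilde M_n(A\vt_{m(n)}, B\vl_{m(n)})$ together with a uniform strong law of large numbers applied to the process $(\t,\l) \mapsto \tilde M_n(\t,\l)$, evaluated along the (random, convergent) sequence of parameters $(A\vt_{m(n)}, B\vl_{m(n)})$. The key point is that two things happen simultaneously: the empirical average $\tilde M_n$ converges to its expectation, locally uniformly in $(\t,\l)$, and the argument $(A\vt_{m(n)}, B\vl_{m(n)})$ converges a.s. to $(A\vt^{A,B}_\s, B\vl^{A,B}_\s)$ by Proposition~\ref{prop:convsaa}. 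Since the limiting deterministic function is, by Lemma~\ref{lem:chgt}, identically equal to $\ce$ for every $(\t,\l)$, the continuity of the limit plus the local uniform convergence will let us pass to the limit and conclude.

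Concretely, the steps are as follows. First, fix $M > \um > 0$ and observe that $(A\vt_{m(n)}, B\vl_{m(n)})$ eventually lies in the compact set $K_{M,\um} = \{(\t,\l) : |(\t,\l)| \le M, d_0(\l) \ge \um\}$ almost surely, because the limit point $(A\vt^{A,B}_\s, B\vl^{A,B}_\s)$ lies in the interior of $\Rp^p$ in the $\l$-coordinates. Second, apply Lemma~\ref{lem:ulln} to the function $h((\t,\l), (\bar G, \bar U)) = f(\bar G + \t, \tilde N(\l)) \expp{-\t\cdot\bar G - |\t|^2/2} \prod_{i=1}^p \expp{\l_i - \mu_i} (\mu_i/\l_i)^{\tilde N_i(\l)}$, viewing the underlying i.i.d.\ data as the pairs $(\bar G^j, \bar U^j)$. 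One must check the two hypotheses of Lemma~\ref{lem:ulln}: continuity in $(\t,\l)$ for almost every $(\bar G, \bar U)$, which holds because $\l \mapsto \tilde N(\l)$ is a.s.\ continuous at all but countably many points and, more carefully, because the discontinuity set has probability zero for fixed $\l$ and the monotonicity in $\l$ controls the behaviour; and the integrability of the supremum over $K_{M,\um}$, which follows from a domination exactly analogous to Equation~\eqref{eq:boundgradtheta}, using $\tilde N(\l) \le \tilde N(\um,\dots,\um)$ componentwise together with Hölder's inequality and Assumption~\hypref{hyp0} (here $\hypref{integrable2}$ is not strictly needed for the law of large numbers, only $\hypref{hyp0}$). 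Third, note that by Lemma~\ref{lem:chgt} the limit function is $\E[h((\t,\l),(\bar G,\bar U))] = \ce$ for every $(\t,\l) \in K_{M,\um}$. Fourth, combine: on the a.s.\ event where $\tilde M_n \to \ce$ uniformly on $K_{M,\um}$ and $(A\vt_{m(n)}, B\vl_{m(n)}) \to (A\vt^{A,B}_\s, B\vl^{A,B}_\s) \in K_{M,\um}$, we get
\begin{align*}
  |\tilde M_n(A\vt_{m(n)}, B\vl_{m(n)}) - \ce| \le \sup_{(\t,\l) \in K_{M,\um}} |\tilde M_n(\t,\l) - \ce| \longrightarrow 0,
\end{align*}
and transporting this back through the equality in law yields the claim.

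The main obstacle I anticipate is the continuity hypothesis in Lemma~\ref{lem:ulln} for the map $(\t,\l) \mapsto h((\t,\l),(\bar G,\bar U))$. The function $\l \mapsto \tilde N(\l)$ is piecewise constant and integer-valued, hence discontinuous at the $\l$-values where some $U_i^j$ hits a jump of the Poisson cdf; the point is that for a \emph{fixed} $\l$ this event has probability zero, so a.s.\ the map is continuous at any prescribed $\l$, but we need continuity as a function of $\l$ on a whole neighbourhood simultaneously. This is resolved by noting that $f(\bar G + \cdot, \cdot)$ composed with the right-continuous step function $\tilde N(\cdot)$ is still a.s.\ continuous at $(A\vt^{A,B}_\s, B\vl^{A,B}_\s)$ — which is all that is genuinely used, since the argument sequence converges there — or, more cleanly, by invoking the monotonicity $\tilde N^j(\l') \le \tilde N^j(\l)$ for $\l \le \l'$ to sandwich $\tilde M_n$ between two averages whose arguments converge and then use a standard monotone-convergence-style argument. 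Either route handles the discontinuity; everything else (the domination bound, the passage to the limit) is routine given the machinery already developed in Section~\ref{sec:optimal}.
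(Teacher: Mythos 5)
Your route is genuinely different from the paper's, and it does not close under the stated hypotheses. The crux is the domination step. You claim that $\E\bigl[\sup_{(\t,\l)\in K}\abs{f(\bar G^1+\t,\tilde N^1(\l))}\,Q(\t,\l)\bigr]<\infty$ follows from a bound ``exactly analogous to'' \eqref{eq:boundgradtheta} and Assumption~\hypref{hyp0}. It does not: in \eqref{eq:boundgradtheta} the supremum over $(\t,\l)$ falls only on the explicit exponential weights, because the arguments of $f$ there are the \emph{unshifted} $(G,N^\mu)$ --- this is precisely the point of the remark after \eqref{eq:var} about applying the change of measure backward, and it is why the uniform SLLN applies so painlessly to $v_n$ in Proposition~\ref{prop:vncvu}. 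In $\tilde M_n$ the parameter $\t$ sits \emph{inside} $f$, and \hypref{hyp0} only controls $\E[\abs{f(G+\t,\cdot)}^{2+\gamma}\cdot(\text{weight})]$ for each fixed $\t$, not the expectation of the supremum over $\t$ in a ball; one can build $f$ with bounded $(2+\gamma)$-moments whose shifted supremum is non-integrable. Integrability of that supremum is exactly the content of the second part of Assumption~\hypref{hyp:ulln_fIS}, which the paper only invokes for the central limit theorem. The same mismatch affects your continuity discussion: both invoking Lemma~\ref{lem:ulln} and your fallback ``continuity at the limit point'' argument need $g\mapsto f(g,k)$ continuous (the first part of \hypref{hyp:ulln_fIS}), which Theorem~\ref{thm:sln} does not assume --- and the barrier payoffs of Section~\ref{sec:apps} are genuinely discontinuous. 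In short, your proof establishes the strong law under the hypotheses of Theorem~\ref{thm:tcl}, not those of Theorem~\ref{thm:sln}.

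The paper avoids both obstacles by never taking a supremum in $\t$ inside $f$: conditionally on the first stage, $M_{n,m}$ is an average of i.i.d.\ unbiased terms whose conditional variance is $v(\t_m,\l_m)-\ce^2$, hence bounded by $\sup_{(\t,\l)\in\cv_\varepsilon}v(\t,\l)-\ce^2<\infty$ on the event $\{(\t_m,\l_m)\in\cv_\varepsilon\}$ (finiteness of $v$ needs only \hypref{hyp0}, again because $f$'s arguments in $v$ are unshifted). Chebyshev plus Borel--Cantelli along $n=k^2$, followed by the standard filling-in of the gaps $k^2\le n<(k+1)^2$ as in the $L^2$ strong law, then gives a.s.\ convergence on that event, and the a.s.\ convergence of $(\t_{m(n)},\l_{m(n)})$ makes the event hold eventually. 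If you want to salvage your approach without the extra assumptions, you would have to reproduce essentially this second-moment argument; otherwise you should state explicitly that you are proving the result under Assumption~\hypref{hyp:ulln_fIS} in addition.
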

It is actually sufficient to prove the result for $A$ and $B$ being identity
matrices. For the sake of clear notations, when $A = I_d$ and $B = I_p$, we write
$M_{n,m(n)}$ instead of  $M^{A,B}_{n,m(n)}$.

\begin{proof}
  We have already seen that $\E[M_{n, m}] = \ce$.
  Thanks the independence of the samples used in the two stages of
  the algorithm, conditionally on $((G^j, N^j), j \ge 1)$, $M_{n, m}$
  writes as a sum of i.i.d random variables. We introduce the $\sigma-$algebra
  $\cg = \sigma( (G^j, N^j), j \ge 1)$. 
  We define for all $m, j \ge 1$
  \begin{align*}
    X_{m, j}  = f(\bar G^j +\t_m, \bar N^j) \expp{- \t_m
    \cdot \bar G^j - \frac{|\t_m|^2}{2}}
    \prod_{i=1}^p \expp{(\l_m)_i -\mu_i}
    \left(\frac{\mu_i}{(\l_m)_i}\right)^{\bar N^j_i}.
  \end{align*}
  Note that conditionally on $\cg$, the sequence $(X_{m, j})_{j \ge 1}$ is
  i.i.d. for any fixed $m \ge 1$. 

  For a fixed $\varepsilon>0$, we recall the definition of $\cv_{\varepsilon}$
  \begin{equation*}
    \cv_{\varepsilon} \defeq \left\{ (\t, \l) \in \R^d \times
    \R^p \; : \; |(\t, \l) - (\t_\s, \l_\s)| \le \varepsilon \right\}.
  \end{equation*}
  In the following, we assume that $\varepsilon$ is small enough, so that
  $\cv_{\varepsilon}$ is included in $\R^d \times \Rp^p $.  For all $m, n \ge
  1$,
  \begin{align}
    \label{eq:nVar}
    & \E\left[ (M_{n,m} - \ce)^2 \ind{(\t_{m},\l_m) \in \cv_{\varepsilon}} \right]  = 
    \E\left[\E\left[ \left(\inv{n} \sum_{i=1}^n (X_{m,i}- \ce) \right)^2 \Big| \cg
    \right] \ind{(\t_{m},\l_m) \in \cv_{\varepsilon}}\right]  \nonumber \\
    & \le \inv{n}   \E\left[\E\left[(X_{m,i}- \ce)^2 \Big| \cg
    \right] \ind{(\t_{m},\l_m) \in \cv_{\varepsilon}}\right]  \nonumber \\
    & \le \inv{n}   \E\left[ v(\t_m, \l_m) \ind{(\t_{m},\l_m) \in \cv_{\varepsilon} }\right]  \nonumber \\
    & \le \inv{n} \left(\sup_{(\t, \l) \in \cv_{\varepsilon}}v(\t, \l)  - \ce^2\right) \le
    \frac{c}{n}.
  \end{align}
  We deduce from the Borell Cantelli Lemma that for any increasing function
  $\rho : \N \rightarrow \N$, $(M_{n^2,\rho(n)} - \ce)
  \ind{(\t_{\rho(n)},\l_{\rho(n)}) \in \cv_{\varepsilon}}$ tends to $0$ a.s. \\

  To prove that $(M_{n,m(n)} - \ce) \ind{(\t_{m(n)},\l_{m(n)}) \in \cv_{\varepsilon}}$
  converges to zero a.s., we mimic the proof of the classical strong law of
  large numbers.

  Let $n \in \N^*$, we define $k = \lfloor \sqrt{n} \rfloor$; then $k^2 \le n
  < (k+1)^2$. 
  \begin{align}
    \label{eq:mimicslln}
    M_{n,m(n)} - \ce = & \inv{n}
    \sum_{i=1}^{k^2} (X_{m(n), i}  - \ce) + \inv{n} \sum_{i=k^2+1}^{n} (X_{m(n),
    i} - \ce) \nonumber \\
    \abs{M_{n,m(n)} - \ce} & \le \inv{k^2}
    \abs{\sum_{i=1}^{k^2} (X_{m(n), i}  - \ce)} + \inv{n} \abs{\sum_{i=k^2+1}^{n} (X_{m(n),
    i} - \ce)}.
  \end{align}
  Using Equation~\eqref{eq:nVar}, 
  \begin{align*}
    \E\left[\left(\inv{k^2} \abs{\sum_{i=1}^{k^2} (X_{m(n), i}  - \ce)}
    \right)^2 \ind{(\t_{m},\l_m) \in \cv_{\varepsilon}}\right] \le \frac{c}{k^2}.
  \end{align*}
  Hence, we easily deduce from the Borrel Cantelli Lemma that $\inv{k^2}
  \abs{\sum_{i=1}^{k^2} (X_{m(n), i}  - \ce)} \ind{(\t_{m(n)},\l_{m(n)}) \in \cv_{\varepsilon}}$
  tends to $0$ a.s. when $k$ goes to infinity, ie. when $n$ goes to infinity.  A
  similar computation as in Equation~\eqref{eq:nVar} leads to 
  \begin{align*}
    \E \left[ \left( \inv{n} \abs{\sum_{i=k^2+1}^{n} (X_{m(n), i} - \ce)}
    \right)^2 \ind{(\t_{m(n)},\l_{m(n)}) \in \cv_{\varepsilon}}\right] \le
    \frac{n - k^2}{n^2}\left(\sup_{(\t, \l) \in K}v(\t, \l)  - \ce^2\right)  \le
    \frac{c}{n^{3/2}}.
  \end{align*}
  Hence, the Borel Cantelli Lemma yields that $\inv{n} \abs{\sum_{i=k^2+1}^{n}
  (X_{m(n), i} - \ce)} \ind{(\t_{m(n)},\l_{m(n)}) \in
  \cv_{\varepsilon}}\rightarrow 0$ a.s. when $n$ goes to infinity. 

  Eventually, we have proved that $(M_{n,m(n)} - \ce) \ind{(\t_{m(n)},\l_{m(n)})
  \in \cv_{\varepsilon}}$ converges to zero a.s. Since, $(\t_{m(n)}, \l_{m(n)})
  \rightarrow (\t_\s, \l_\s)$ a.s., we deduce that $M_{n,m(n)} \rightarrow \ce$
  a.s. when $n$ goes to infinity.
\end{proof}

\begin{thm}
  \label{thm:tcl}
  Let $m(n)$ be an integer valued function of $n$ increasing to infinity with $n$ and
  such that $m(n) \sim n^\beta$ for some $\beta > 0$. Assume that
  \begin{hypo}
    \label{hyp:ulln_fIS}
    \begin{subhypo}
      \item for all $k \in \N^p$, the function $g \in \R^{d} \longmapsto f(g, k)$ is
        continuous;
      \item there exists a compact neighbourhood $\cv$ of $(\vt_\s, \vl_\s)$ included
        in $\R^{d'} \times {\R_+^*}^{p'}$ and $\eta >0$ such that $\E\left[ \sup_{(\vt,
        \vl) \in \cv} |f(\bar G+A\vt, \tilde N^1(B\vl))|^{2(1 + \eta)} \right] <
        \infty$.
      \end{subhypo}
  \end{hypo}
  Then, under Assumptions~\hypref{hyp0} and~\hypref{integrable2}, 
  $$
  \sqrt{n}
  (\tilde M_{n}(A\vt_{m(n)}, B\vl_{m(n)}) - \E[f(G, N^\mu)]) \xrightarrow[n
  \rightarrow +\infty]{law} \cn(0, v^{A,B}(\vt_\s, \vl_\s)).
  $$
\end{thm}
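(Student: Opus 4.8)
The plan is to freeze the importance-sampling parameter at its optimal value $(\vt_\s,\vl_\s)$, treat the resulting ordinary i.i.d.\ average with the classical central limit theorem, and prove that replacing $(\vt_\s,\vl_\s)$ by $(\vt_{m(n)},\vl_{m(n)})$ perturbs the estimator only by a term that is negligible at rate $n^{-1/2}$. To this end, set
$$
\Phi(g,u,\vt,\vl)=f\bigl(g+A\vt,\nu(u,B\vl)\bigr)\,\expp{-(A\vt)\cdot g-\frac{|A\vt|^2}{2}}\prod_{i=1}^p\expp{(B\vl)_i-\mu_i}\bigl(\mu_i/(B\vl)_i\bigr)^{\nu_i(u,B\vl)},
$$
where $\nu(\bar U^j,\l)=\tilde N^j(\l)$ is the componentwise Poisson quantile transform introduced just before the statement, so that $\tilde M_n(A\vt,B\vl)=\frac1n\sum_{j=1}^n\Phi(\bar G^j,\bar U^j,\vt,\vl)$. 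I will use three facts: the pairs $(\bar G^j,\bar U^j)$ are i.i.d.\ and independent of $\cg=\sigma((G^j,N^j),\,j\ge1)$; the minimizer $(\vt_{m(n)},\vl_{m(n)})$ is $\cg$-measurable and, by Proposition~\ref{prop:convsaa}, converges a.s.\ to $(\vt_\s,\vl_\s)$; and, by Lemma~\ref{lem:chgt} applied to $h=f$, $\E[\Phi(\bar G,\bar U,\vt,\vl)]=\ce$ for every $(\vt,\vl)$.

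I then split $\sqrt n\bigl(\tilde M_n(A\vt_{m(n)},B\vl_{m(n)})-\ce\bigr)=S_n+R_n$, where $S_n=\frac1{\sqrt n}\sum_{j=1}^n\bigl(\Phi(\bar G^j,\bar U^j,\vt_\s,\vl_\s)-\ce\bigr)$ and $R_n=\frac1{\sqrt n}\sum_{j=1}^n\bigl(\Phi(\bar G^j,\bar U^j,\vt_{m(n)},\vl_{m(n)})-\Phi(\bar G^j,\bar U^j,\vt_\s,\vl_\s)\bigr)$. The replications of $S_n$ are i.i.d.\ with mean $\ce$ and finite variance, so the ordinary central limit theorem gives $S_n\xrightarrow{law}\cn(0,v^{A,B}(\vt_\s,\vl_\s))$, exactly as in Proposition~\ref{prop:sln_tclfixed}; by Slutsky's lemma the theorem follows once $R_n\to0$ in probability. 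Conditioning on $\cg$ freezes $(\vt_{m(n)},\vl_{m(n)})$ and makes the summands of $R_n$ i.i.d.\ and centred (each $\Phi$ above has conditional mean $\ce$), so that $\E[R_n^2\mid\cg]=g(\vt_{m(n)},\vl_{m(n)})$ with $g(\vt,\vl)=\E\bigl[\bigl(\Phi(\bar G,\bar U,\vt,\vl)-\Phi(\bar G,\bar U,\vt_\s,\vl_\s)\bigr)^2\bigr]$. Hence $\P(|R_n|\ge\varepsilon)\le\E\bigl[\min\bigl(1,\varepsilon^{-2}g(\vt_{m(n)},\vl_{m(n)})\bigr)\bigr]$ for all $\varepsilon>0$, which tends to $0$ by bounded convergence provided $g$ is continuous at $(\vt_\s,\vl_\s)$, where it vanishes, since $(\vt_{m(n)},\vl_{m(n)})\to(\vt_\s,\vl_\s)$ a.s.

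The continuity of $g$ at $(\vt_\s,\vl_\s)$ is the heart of the matter and the step I expect to be the main obstacle: $\tilde M_n$ is \emph{not} continuous in $\vl$ — the Poisson quantile jumps — so no Taylor expansion of $\tilde M_n$ is available and the estimate must be obtained in $L^2$. I would argue in two steps. First, almost surely $(\vt,\vl)\mapsto\Phi(\bar G^1,\bar U^1,\vt,\vl)$ is continuous at $(\vt_\s,\vl_\s)$: the map $\vl\mapsto\nu(\bar U^1,B\vl)$ is discontinuous only when some $\bar U^1_i$ equals one of the countably many values $P((B\vl_\s)_i;k)$, $k\in\N$ — a $\bar U^1$-null event, because each $\l\mapsto P(\l;k)$ is continuous — and off this event $\vl\mapsto\nu(\bar U^1,B\vl)$ is locally constant near $\vl_\s$; combined with the continuity of $g\mapsto f(g,k)$ (the continuity part of Assumption~\hypref{hyp:ulln_fIS}) and of the exponential weights, this gives the claim. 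Second, on a compact neighbourhood $\cv$ of $(\vt_\s,\vl_\s)$ as in Assumption~\hypref{hyp:ulln_fIS}, I would bound $\sup_{(\vt,\vl)\in\cv}\Phi(\bar G^1,\bar U^1,\vt,\vl)^2$ by the product of an exponential function $\expp{c|\bar G^1|}$ of $\bar G^1$ (all moments finite, $\bar G^1$ being Gaussian), a factor $\prod_i\max(1,\mu_i/\um)^{2\kappa_i}$ with $\kappa_i=\sup_{(\vt,\vl)\in\cv}\nu_i(\bar U^1,B\vl)$, which by monotonicity of the quantile in $\vl$ is a Poisson-type variable enjoying all exponential moments, and $\sup_{(\vt,\vl)\in\cv}f(\bar G^1+A\vt,\tilde N^1(B\vl))^2$, which lies in $L^{1+\eta}$ by the integrability part of Assumption~\hypref{hyp:ulln_fIS}. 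Hölder's inequality then gives $\E\bigl[\sup_{(\vt,\vl)\in\cv}\Phi(\bar G^1,\bar U^1,\vt,\vl)^2\bigr]<\infty$, and dominated convergence yields $g(\vt,\vl)\to g(\vt_\s,\vl_\s)=0$ as $(\vt,\vl)\to(\vt_\s,\vl_\s)$, completing the proof. (This route in fact uses only $m(n)\to\infty$; the prescribed growth $m(n)\sim n^\beta$ would be needed only for a coarser treatment of $R_n$ based on a modulus of continuity together with the convergence rate of $(\vt_{m(n)},\vl_{m(n)})$ supplied by Proposition~\ref{prop:convsaa}.)
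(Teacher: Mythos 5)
Your proof is correct, and its skeleton is the paper's: the same splitting into a frozen-parameter average (classical CLT, limit $\cn(0,v^{A,B}(\vt_\s,\vl_\s))$, with the paper's convention of writing $v^{A,B}$ for the limiting variance) plus a remainder $R_n$ to be shown negligible in probability, the same coupling of all intensities through the Poisson quantile transform with its a.s.\ local constancy near $\vl_\s$, and the same domination of $\sup_{(\vt,\vl)\in\cv}\Phi^2$ via Hölder's inequality together with the integrability part of Assumption~\hypref{hyp:ulln_fIS}. The genuine difference lies in how $R_n\to 0$ is obtained. The paper first splits on the event $\{n^\alpha|(\vt_{m(n)},\vl_{m(n)})-(\vt_\s,\vl_\s)|>1\}$ with $0<\alpha<\beta/2$ and controls its probability through the $\sqrt{m}$-rate of Proposition~\ref{prop:convsaa} --- this is precisely where $m(n)\sim n^\beta$ and Assumption~\hypref{integrable2} are invoked --- and then bounds the conditional second moment restricted to the shrinking ball of radius $n^{-\alpha}$. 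You instead exploit the exact conditional centring of the summands of $R_n$ (both $\Phi(\cdot,\vt,\vl)$ and $\Phi(\cdot,\vt_\s,\vl_\s)$ have conditional mean $\ce$ by Lemma~\ref{lem:chgt}) to get $\E[R_n^2\mid\cg]=g(\vt_{m(n)},\vl_{m(n)})$ with no residual factor of $n$, and conclude by continuity of $g$ at $(\vt_\s,\vl_\s)$, where it vanishes, plus bounded convergence. This is cleaner, makes explicit the conditional-centring identity that the paper uses silently when it equates $n\,\E[|\tilde M_n(\vt_{m(n)},\vl_{m(n)})-\tilde M_n(\vt_\s,\vl_\s)|^2\,\cdot\,]$ with the second moment of a single difference, and, as you correctly observe, requires only the a.s.\ convergence of $(\vt_{m(n)},\vl_{m(n)})$ rather than its rate, so the growth condition on $m(n)$ becomes superfluous for this step under your argument. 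The paper's quantitative route would only pay off if the uniform $L^1$ domination over a fixed neighbourhood $\cv$ were unavailable and one had to shrink the neighbourhood with $n$ at a controlled speed; as the theorem is stated, your version proves the same conclusion under nominally weaker hypotheses.
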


\begin{proof}
  It is actually sufficient to prove the result for $A$ and $B$ being identity
  matrices.
  \begin{equation*}
    \sqrt{n} (\tilde M_n (\theta_{m(n)}, \l_{m(n)}) - \ce) = \sqrt{n} (\tilde M_{n}(\t_\s, \l_\s) - \ce) 
    + \sqrt{n} (\tilde M_{n} (\t_{m(n)}, \l_{m(n)}) - \tilde M_{n}(\t_\s, \l_\s)) 
  \end{equation*}
  From the standard central limit theorem, $\sqrt{n} (\tilde M_{n}(\t_\s,
  \l_\s) - \ce) \xrightarrow[n \rightarrow +\infty]{law} \cn(0, v(\t_\s,
  \l_\s))$. Therefore, it is sufficient to prove that $\sqrt{n} (\tilde
  M_{n}(\t_{m(n)}, \l_{m(n)}) - \tilde M_{n}(\t_\s, \l_\s)) \xrightarrow[n
  \rightarrow +\infty]{Pr} 0$. Let $\varepsilon>0$ and $0 < \alpha < \beta/2$.
  \begin{multline*}
    \P\left( \sqrt{n} \abs{\tilde
    M_{n}(\t_{m(n)}, \l_{m(n)}) - \tilde M_{n}(\t_\s, \l_\s)} > \varepsilon
    \right) \le  \P( n^\alpha\abs{(\t_{m(n)}, \l_{m(n)}) - (\t_\s,
    \l_\s)} > 1) \\
    + \frac{n}{\varepsilon^2} \E\left[ \abs{\tilde M_{n}(\t_{m(n)},
    \l_{m(n)}) - \tilde M_{n}(\t_\s, \l_\s)}^2 \ind{\abs{(\t_{m(n)}, \l_{m(n)})
    - (\t_\s, \l_\s)} \le n^{-\alpha} } \right].
  \end{multline*}
  Note that $n^\alpha \sim m(n)^{\alpha/\beta}$ with $\alpha / \beta < 1/2$,
  hence we deduce from Proposition~\ref{prop:convsaa}, that $\P(
  n^\alpha\abs{(\t_{m(n)}, \l_{m(n)}) - (\t_\s, \l_\s)} > 1) \longrightarrow
  0$. We define
  \begin{align*}
    Q(\t, \l) = \expp{- \t \cdot \bar G^1 - \frac{|\t|^2}{2}}
    \prod_{i=1}^p \expp{\l_i -\mu_i}
    \left(\frac{\mu_i}{\l_i}\right)^{\tilde N^1_i(\l)}.
  \end{align*}
  Conditionally on $(\t_{m(n)}, \l_{m(n)})$, $\tilde M_{n}(\t_{m(n)},
  \l_{m(n)})$ writes as a sum of i.i.d random variables.
  \begin{align}
    \label{eq:cvL1Mtilde}
    &n \E\left[ \abs{\tilde M_{n}(\t_{m(n)}, \l_{m(n)}) -
    \tilde M_{n}(\t_\s, \l_\s)}^2 \ind{\abs{(\t_{m(n)}, \l_{m(n)}) - (\t_\s,
    \l_\s)} \le n^{-\alpha} } \right] =  \nonumber\\
    & \E\Bigg[ \left|
    f(\bar G^1 +\t_\s, \tilde N^1(\l_\s)) Q(\t_\s, \l_\s) -
    f(\bar G^1 +\t_{m(n)}, \tilde
    N^1(\l_{m(n)})) Q(\t_{m(n)}, \l_{_m(n)}) \right|^2 \nonumber\\
    &\qquad \ind{\abs{(\t_{m(n)}, \l_{m(n)}) - (\t_\s,
    \l_\s)} \le n^{-\alpha} } \Bigg].
  \end{align}
  Thanks to the convergence of $\tilde N^1(\l_{m(n)})$, $Q(\t_{m(n)}, \l_{m(n)})$
  converges a.s. to $Q(\t_\s, \l_\s)$ when $n$ goes to infinity. Since for
  $n$ large enough, $N^1(\l_{m(n)}) = N^1(\l_\s)$, the continuity of $f$ with
  respect to its first argument enables to prove that $f(\bar G^1 +\t_{m(n)}, \tilde
  N^1(\l_{m(n)}))$ converges a.s. to $f(\bar G^1 +\t_\s, \tilde N^1(\l_\s))$.
  Hence, the absolute value inside the expectation tends to zero a.s. We need to
  bound the term inside the expectation by an integrable random variable to
  apply the bounded convergence theorem which yields the result.
  
  \begin{align*}
    &\left|
    f(\bar G^1 +\t_\s, \tilde N^1(\l_\s)) Q(\t_\s, \l_\s) -
    f(\bar G^1 +\t_{m(n)}, \tilde
    N^1(\l_{m(n)})) Q(\t_{m(n)}, \l_{_m(n)}) \right|^2 \\
    &\qquad \ind{\abs{(\t_{m(n)}, \l_{m(n)}) - (\t_\s,
    \l_\s)} \le n^{-\alpha} }  \le 
    2 \sup_{|(\theta, \l) - (\t_\s, \l_\s)| \le n^{-\alpha}}
    \left| f(\bar G^1 +\t, \tilde N^1(\l)) \right|^2 Q^2(\t, \l).
  \end{align*}
  For $n$ large enough,  $\left\{ |(\theta, \l) - (\t_\s, \l_\s)| \le
  n^{-\alpha} \right\} \subset \cv$. Moreover, there exist $\underline{m}>0$ and
  $M>0$ such that $\cv \subset \left\{ |\t| \le M, |\l| \le M \mbox{ and }
  d_0(\l) \ge \underbar{m} \right\}$. Hence,
  \begin{align*}
    & \sup_{(\theta, \l) - (\t_\s, \l_\s)| \le n^{-\alpha}}
    \left| f(\bar G^1 +\t, \tilde N^1(\l)) \right| Q(\t, \l) \\
    & \le \sup_{(\theta, \l) \in \cv} \left| f(\bar G^1 +\t, \tilde N^1(\l))
    \right| \expp{p M} \prod_{i=1}^d (\expp{-M G^1_i} + \expp{M G^1_i})
    \prod_{i=1}^p \left(\left(\frac{\mu_i}{\underline{m}}\right)^{\tilde
    N^1_i(\underline{m})} + \left(\frac{\mu_i}{\underline{m}}\right)^{\tilde
    N^1_i(M)} \right) \\
    & \le  \sum_{\substack{\sigma \in \{-M,M\}^d \\ \nu \in \{\underline{m},
    M\}^p}} \sup_{(\theta, \l) \in \cv} \left| f(\bar G^1 +\t, \tilde N^1(\l))
    \right| \expp{p M} \expp{\sigma \cdot G^1} \prod_{i=1}^p
    \left(\frac{\mu_i}{\underline{m}}\right)^{\tilde N^1_i(\nu)}.
  \end{align*}
  Then, using Hölder's inequality we get
  \begin{align*}
    & \E \left[ \sum_{\substack{\sigma \in \{-M,M\}^d \\ \nu \in
    \{\underline{m}, M\}^p}} \sup_{(\theta, \l) \in \cv} \left| f(\bar G^1 +\t,
    \tilde N^1(\l)) \right|^2 \left( \expp{p M} \expp{\sigma \cdot G^1}
    \prod_{i=1}^p \left(\frac{\mu_i}{\underline{m}}\right)^{\tilde
    N^1_i(\nu)} \right)^2 \right] \\
   & \le \sum_{\substack{\sigma \in \{-M,M\}^d \\ \nu \in \{\underline{m},
   M\}^p}} \E \left[ \sup_{(\theta, \l) \in \cv} \left| f(\bar G^1 +\t, \tilde
   N^1(\l)) \right|^{2(1+\eta)} \right]^{\frac{1}{1+\eta}}
   \E\left[\left(\expp{p M} \expp{\sigma \cdot G^1} \prod_{i=1}^p
   \left(\frac{\mu_i}{\underline{m}}\right)^{\tilde N^1_i(\nu)}\right)^{2 +
   \frac{2}{\eta}} \right]^{\frac{\eta}{1+\eta}}. 
  \end{align*}
  Since we have assumed that $\E \left[ \sup_{(\theta, \l) \in \cv} \left|
  f(\bar G^1 +\t, \tilde N^1(\l)) \right|^{2(1+\eta)} \right] < \infty$, the
  random variables $\left| f(\bar G^1 +\t_\s, \tilde N^1(\l_\s)) Q(\t_\s,
  \l_\s) - f(\bar G^1 +\t_{m(n)}, \tilde N^1(\l_{m(n)})) Q(\t_{m(n)},
  \l_{_m(n)}) \right|^2 \ind{\abs{(\t_{m(n)}, \l_{m(n)}) - (\t_\s, \l_\s)}
  \le n^{-\alpha} }$ are uniformly bounded w.r.t $n$ by an integrable random
  variable. Hence, the left hand side of Equation~\eqref{eq:cvL1Mtilde} tends to
  zero which achieves to prove that $\sqrt{n} (\tilde M_{n}(\t_{m(n)},
  \l_{m(n)}) - \tilde M_{n}(\t_\s, \l_\s)) \xrightarrow[n \rightarrow
  +\infty]{Pr} 0$.
\end{proof}

\subsection{Practical implementation}

The difficult part of Algorithm~\ref{algo:is} is the numerical computation of
the minimizing pair $(\t_m, \l_m)$.  The efficiency of the optimization
algorithm depends very much on the magnitude of the smallest eigenvalue of
$\nabla^2 v$. From the end of the proof of Proposition~\ref{prop:convex}, we can
deduce that the smallest eigenvalue of $\nabla^2 v$ is larger than
\begin{align*}
  \E\left[F(G,\t,N^\mu,\l) \ind{N^\mu=(n_1,\dots,n_p)}\right] \min\left(1,
  \frac{n_1}{\l_1^2}, \dots, \frac{n_p}{\l_p^2} \right).
\end{align*}
This lower bound depends on the function $f$ whereas we would rather find a
uniform lower bound. Hence, we advice to rewrite $\nabla v$ as
\begin{align*}
  \nabla v(\t, \l) = & \E\left[ 
  \begin{pmatrix}
    \t  \\
    1_p 
  \end{pmatrix}
  f(G, N^\mu)^2 \expp{- \t \cdot G + \frac{|\t|^2}{2}}
  \prod_{i=1}^p \expp{\l_i -\mu_i} \left(\frac{\mu_i}{\l_i}\right)^{N^{\mu_i}_i}
  \right] \\
  & - \E\left[ 
  \begin{pmatrix}
    G \\
    \frac{N^\mu}{\l}
  \end{pmatrix}
  f(G, N^\mu)^2 \expp{- \t \cdot G + \frac{|\t|^2}{2}}
  \prod_{i=1}^p \expp{\l_i -\mu_i} \left(\frac{\mu_i}{\l_i}\right)^{N^{\mu_i}_i}
  \right]
\end{align*}
where $\frac{N\mu}{\l} = \left( \frac{N_1^{\mu_1}}{\l_1}, \dots,
\frac{N_p^{\mu_p}}{\l_p} \right)$. Hence, $(\t^\s, \l^\s)$ can be seen
as the root of 
\begin{align*}
  \nabla u(\t, \l) = & 
  \begin{pmatrix}
    \t  \\ 1_p 
  \end{pmatrix}
  - \frac{\E\left[ 
  \begin{pmatrix}
    G \\ \frac{N^\mu}{\l}
  \end{pmatrix}
  f(G, N^\mu)^2 \expp{- \t \cdot G}
  \prod_{i=1}^p \left(\frac{\mu_i}{\l_i}\right)^{N^{\mu_i}_i}
  \right]}{ \E\left[ f(G, N^\mu)^2 \expp{- \t \cdot G}
  \prod_{i=1}^p \left(\frac{\mu_i}{\l_i}\right)^{N^{\mu_i}_i}
  \right]}
\end{align*}
with $u(\t, \l) = \frac{|\t|^2}{2} + \sum_{i=1}^p \l_i + \log \E \left[
f(G, N^\mu)^2 \expp{- \t \cdot G}
\prod_{i=1}^p \left(\frac{\mu_i}{\l_i}\right)^{N^{\mu_i}_i}
\right]$. The Hessian matrix of $u$ is given by
\begin{align*}
  \nabla^2 u(\t, \l) = & 
  \begin{pmatrix}
    I_d & 0 \\ 0 & \frac{\E\left[ D f(G, N^\mu)^2 \expp{- \t \cdot G + }
  \prod_{i=1}^p \left(\frac{\mu_i}{\l_i}\right)^{N^{\mu_i}_i}
  \right]}{\E\left[ f(G, N^\mu)^2 \expp{- \t \cdot G}
  \prod_{i=1}^p \left(\frac{\mu_i}{\l_i}\right)^{N^{\mu_i}_i}
  \right]}
  \end{pmatrix} \\
  & + \frac{\E\left[ 
  \begin{pmatrix}
    G \\ \frac{N^\mu}{\l}
  \end{pmatrix}
  \begin{pmatrix}
    G \\ \frac{N^\mu}{\l}
  \end{pmatrix}^*
  f(G, N^\mu)^2 \expp{- \t \cdot G}
  \prod_{i=1}^p \left(\frac{\mu_i}{\l_i}\right)^{N^{\mu_i}_i}
  \right]}{ \E\left[ f(G, N^\mu)^2 \expp{- \t \cdot G}
  \prod_{i=1}^p \left(\frac{\mu_i}{\l_i}\right)^{N^{\mu_i}_i}
  \right]} \\
  & -  \frac{\E\left[ 
  \begin{pmatrix}
    G \\ \frac{N^\mu}{\l}
  \end{pmatrix}
  f(G, N^\mu)^2 \expp{- \t \cdot G}
  \prod_{i=1}^p \left(\frac{\mu_i}{\l_i}\right)^{N^{\mu_i}_i}
  \right]\E\left[ 
  \begin{pmatrix}
    G \\ \frac{N^\mu}{\l}
  \end{pmatrix}
  f(G, N^\mu)^2 \expp{- \t \cdot G}
  \prod_{i=1}^p \left(\frac{\mu_i}{\l_i}\right)^{N^{\mu_i}_i}
  \right]^* }{ \E\left[ f(G, N^\mu)^2 \expp{- \t \cdot G}
  \prod_{i=1}^p \left(\frac{\mu_i}{\l_i}\right)^{N^{\mu_i}_i}
  \right]^2}
\end{align*}
where we recall that the diagonal matrix $D$ is defined by $D = \diag_p \left(
\frac{N_1^{\mu_1}}{\l_1^2}, \dots, \frac{N_p^{\mu_p}}{\l_p^2} \right)$.
The Cauchy Schwartz inequality yields that the last two terms in the expression
of $\nabla^2 u$ form a positive semi definite matrix. The first part of the
Hessian is a positive definite matrix with smallest eigenvalue larger than
\begin{align*}
 & \min\left(  1, \frac{1}{\l_j^2} 
  \frac{\E\left[ N_i^{\mu_i} f(G, N^\mu)^2 \expp{- \t \cdot G}
  \prod_{i=1}^p \left(\frac{\mu_i}{\l_i}\right)^{N^{\mu_i}_i}
  \right]}{\E\left[ f(G, N^\mu)^2 \expp{- \t \cdot G}
  \prod_{i=1}^p \left(\frac{\mu_i}{\l_i}\right)^{N^{\mu_i}_i}
  \right]}
 \right) \\
& \qquad = \min\left(  1, \frac{\mu_j}{\l_j^3} 
  \frac{\E\left[ f(G, N^\mu + e_j)^2 \expp{- \t \cdot G}
  \prod_{i=1}^p \left(\frac{\mu_i}{\l_i}\right)^{N^{\mu_i}_i}
  \right]}{\E\left[ f(G, N^\mu)^2 \expp{- \t \cdot G}
  \prod_{i=1}^p \left(\frac{\mu_i}{\l_i}\right)^{N^{\mu_i}_i}
  \right]}
 \right)
\end{align*}
where the equality comes from Stein's formula for Poisson random variables and
$e_j$ denotes the $j-th$ element of the canonical basis. When the function $f$
is increasing with respect to each component of its second argument, then we
come up with the following lower bound independent of the function $f$
\begin{align*}
  \min\left(  1, \frac{\mu_j}{\l_j^3} \right).
\end{align*}
Our numerical experiments advocate the use of $u$ instead of $v$ to speed up the
computation of $(\t^\s, \l^\s)$.

Using this new expression, we implement Algorithm~\ref{newton-algo} to
construct an approximation $x_n^k$ of $(\t_n, \l_n)$. Since $ u_n$ is
strongly convex, for any fixed $n$, $x_n^k$ converges to $(\t_n, \l_n)$ when
$k$ goes to infinity.  The direction of descent $d_n^k$ at step $k$ should be
computed as the solution of a linear system. There is no point in computing
the inverse of $\nabla^2 u_n(x_n^k)$, which would be computationally
much more expensive.

{\bf Remarks on the implementation :} From a practical point of view,
$\varepsilon$ should be chosen reasonably small  $\varepsilon \approx
10^{-6}$. This algorithm converges very quickly and, in most cases, less than
$5$ iterations are enough to get a very accurate estimate of $(\t_n, \l_n)$,
actually within the $\varepsilon-$error. Since the points at which the 
function $f$ is evaluated remain constant through the iterations of Newton's
algorithm, the values $f^2(G^j, N^j)$ for $j=1, \dots, n$ should be precomputed
before starting the optimization algorithm which considerably speeds up the
whole process. The Hessian matrix of our problem is easily tractable so
there is no point in using Quasi-Newton's methods.
\begin{algorithm}[!h]
  \caption{Projected Newton's algorithm}
  \label{newton-algo}
  \begin{algorithmic}
    \STATE Choose an initial value $x_n^0 \in \R^{d + p}$.
    \STATE $k=1$
    \WHILE{$\abs{\nabla u_n(x_n^k)} > \varepsilon$}
      \STATE 1. Compute $d_n^k$ such that $(\nabla^2 u_n(x_n^k)) d_n^k =
    - \nabla u_n(x_n^k)$
      \STATE 2. $x_n^{k+1/2} = x_n^k + d_n^k$
      \FOR{$i=1:d+p$}
        \IF{$x_n^{k+1/2}(i) > 0$} 
          \STATE  $x_n^{k+1}(i) = x_n^{k+1/2}(i)$
        \ELSE  
        \STATE $x_n^{k+1}(i) = \frac{x_n^{k}(i)}{2}$
        \ENDIF
      \ENDFOR
      \STATE 3. $k=k+1$
    \ENDWHILE
  \end{algorithmic}
\end{algorithm}

\section{Application to jump processes in finance}
\label{sec:apps}

We will apply our methodology to two different classes of jump processes:
jump diffusion processes and stochastic volatility processes with jumps, in this
latter case the volatility itself may jump also.

We consider a filtered probability space $(\Omega, \ca, (\cf_t)_{0 \le t \le T},
\P)$ with a finite time horizon $T>0$ and $I$ financial assets. We define on
this space a Brownian motion $W$ with values in $\R^I$ and $I+1$ independent
Poisson processes $(N^1, \dots, N^{I+1})$ with constant intensities $\mu^1,
\dots, \mu^{I+1}$. We also consider $(I+1)$ independent sequences
$(Y^i_j)_{j \ge 1}$ for $i=1 \dots I+1$ of i.i.d. real valued random variables
with common law denoted $Y$ in the following.  The Poisson processes, the
Brownian motions and the sequences $(Y^i_j)_{j}$ are supposed to be independent
of each other.  Actually, we are interested in considering the compound Poisson
process associated to the Poisson process $N^i$ and to the jump sequences $Y^i$
for $i=1,\dots,I+1$.

\subsection{Jump diffusion processes}

In this class of models, we assume that the log-prices evolve according to the
following equation
\begin{align}
  \label{eq:log-merton}
  X^i_t = \left(\beta^i - \frac{{(\sigma^i)}^2}{2}\right) t + \sigma^i L^i W_t +
  \sum_{j=1}^{N^i_t} Y^i_j + \sum_{j=1}^{N^{I+1}_t} Y^{I+1}_j
\end{align}
where $\beta = (\beta^i, \dots, \beta^I)^*$ is the drift vector and $\sigma =
(\sigma^i, \dots, \sigma^I)^*$ the volatility vector. The row vectors $L_i$ are
such that the matrix $L = (L^1; \dots; L^I)$ verifies that $\Gamma =
L L^*$ is a symmetric definite positive matrix with unit diagonal elements.  The matrix
$\Gamma$ embeds the covariance structure of the continuous part of the model. We
have also chosen to take into account in the model the possibility to have
simultaneous jumps which explains the extra jump term $\sum_{j=1}^{N^{I+1}_t}
Y^{I+1}_j$ common to all underlying assets. This common jump term embeds the
systemic risk of the market.

From Equation~\ref{eq:log-merton}, we deduce that the prices at time $t$ 
$S^i_t = \expp{X^i_t}$ are defined by
\begin{align*}
  S^i_t = S^i_0 \exp\left\{ \left(\beta^i - \frac{{(\sigma^i)}^2}{2}\right) t +
  \sigma^i L^i W_t \right\} 
  \prod_{j=1}^{N^i_t} \expp{Y^i_j}  \prod_{j=1}^{N^{I+1}_t} \expp{Y^{I+1}_j}
\end{align*}
which corresponds for each asset to a one dimensional Merton model with
intensity $\mu^i + \mu^{I+1}$ when the $Y^i_j$ are normally distributed.

As, we assumed that $\P$ was the martingale measure associated to the risk free rate
$r >0$ supposed to be deterministic, the processes $(\expp{-r t} S_t)_t$
must be martingales under $\P$. This martingale condition imposes that for every
$i=1,\dots,I$, 
\begin{align*}
  \beta^i = r - (\mu^i \E[Y^i] + \mu^{I+1} \E[Y^{I+1}]).
\end{align*}
In the following, $\beta_i$ will always stand for this quantity.

\begin{rem}
  In the one dimensional case, ie. when $I=1$, we only consider a single
  compound Poisson process as the systemic risk jump term becomes irrelevant.
  Hence, the log-price in dimension one will follow
  \begin{align*}
    X_t = \left(\beta - \frac{{\sigma}^2}{2}\right) t + \sigma W_t +
    \sum_{j=1}^{N_t} Y_j.
  \end{align*}
  For the sake of clearness, we will not treat the one dimensional case separately in the
  following, even though the practical one dimensional implementation relies on a single
  Poisson process. So, we will always consider that the Poisson process has values in
  $\R^{I+1}$.
\end{rem}

In the numerical examples, we will need to discretize the multi dimensional
price process on a time grid $0 = t_0 < t_1 < \dots < t_J = T$. We will assume
that this time grid is regular and given by $t_j = \frac{j T}{J}$,
$j=0,\dots,J$. Just to fix our notations, we consider that the Brownian (resp.
Poisson) increments are stored as a column vector with size $I \times J$ (resp.
$(I+1) \times J$).
$$\begin{pmatrix}
  W_{t_1} \\ W_{t_2}\\ \vdots \\ W_{t_{J-1}} \\ W_{t_J}
\end{pmatrix}=
\begin{pmatrix}
  \sqrt{t_1} Id & 0 & 0 &\hdots &0\\
  \sqrt{t_1} Id &\sqrt{t_2-t_1} Id & 0 &\hdots &0\\
  \vdots&\ddots&\ddots&\ddots&\vdots\\
  \vdots&\ddots&\ddots& \sqrt{t_{J-1}-t_{J-2}} Id&   0 \\
  \sqrt{t_1}Id & \sqrt{t_2-t_1} Id  &\hdots & \sqrt{t_{J-1}-t_{J-2}} Id
  &\sqrt{t_J-t_{J-1}} Id
\end{pmatrix}G,$$
where $G$ is a normal random vector in $\R^{I \times J}$ and  $Id$ is the
identity matrix in dimension $I \times I$. The Poisson process is discretized in
a similar way.

\paragraph{The Merton jump diffusion model.}

The Merton model corresponds to the particular choice of a normal distribution
for the variables $(Y^i)$, $Y^i \sim \cn(\alpha, \delta)$ where $\alpha \in \R$
and $\delta>0$. In this framework, the jump sizes in the price follow a log
normal distribution.

\paragraph{The Kou model.}

In the Kou model~\cite{Kkou02}, the variables $Y^i$ follow an asymmetric exponential
distribution with density
\begin{align*}
  p^i \mu^i_+ \expp{-\mu^i_+ x} \ind{x >0} + (1-p)^i \mu^i_-
  \expp{\mu^i_- x} \ind{x <0} 
\end{align*}
where $p^i \in [0,1]$ is the probability of a positive jump for the $i-th$
component and the variables $\mu^i_+ >0, \mu^i_->0$ govern the decay of
each exponential part.

\subsection{Stochastic volatility models with jumps}

% Pour une description 1d du modèle de BNS, voir le site
% http://www2.warwick.ac.uk/fac/sci/masdoc/current/msc-modules/ma916/stochastic_finance/stochastic_volatility_models/

In this section, we consider the stochastic volatility type model developed
by~\cite{bns01-2,bns01-1} in which the volatility process is a non Gaussian
Ornstein Uhlenbeck driven by a compound Poisson process.

We consider that the log-prices satisfy for $i=1,\dots,I$
\begin{align*}
  dX^{i}_t = (a^i - \sigma^i/2) dt + \sqrt{\sigma^i_{t^-}} dW^i_t + \psi^i dZ^i_{\kappa^i t} +
  \psi^{I+1} dZ^{I+1}_{\kappa^{I+1} t}
\end{align*}
where $a \in \R^I$, $\psi \in \R^{I+1}$ has non-positive components which
account for the positive leverage effect, $Z$ is $(I+1)$-dimensional Lévy
process defined by $Z^i_t = \sum_{k=1}^{N_t^i} Y^i_k$ for $i=1,\dots,I+1$
and the squared volatility process $(\sigma_t)_t$ is Lévy driven Ornstein Uhlenbeck
\begin{align*}
  d\sigma^i_t = -(\kappa^i  + \kappa^{I+1}) \sigma^i_t dt + dZ^{i}_{\kappa^i t} +
  dZ^{I+1}_{\kappa^{I+1} t}.
\end{align*}
For the squared volatility process to remain positive, we assume that the
components of $Z$ only jumps upward, which means that the random variables
$Y^i_j$ are non-negative. 

More specifically, the jump sequence $Y^i$ is i.i.d following the exponential
distribution with parameter $\beta^i>0$ for $i=1,\dots,I+1$. The drift vector
$a$ is chosen such that the discounted prices are martingales under $\P$. Hence,
a straight computation shows that we need to set 
\begin{align*}
  a^{i} = r - \psi^i \frac{\kappa^i \mu^i}{\beta^i - \psi^i} - \psi^{I+1}
  \frac{\kappa^{I+1} \mu^{I+1}}{\beta^{I+1} - \psi^{I+1}}, \quad
  \mbox{for $i=1,\dots,I$} 
\end{align*}
to ensure the martingale property of $(\expp{-rt} \exp{X_t})_t$.

As in the section on jump diffusion models, the extra Poisson process giving
raise to the term $dZ^{I+1}$ in the dynamics of $X$ and $\sigma$ accounts for
modelling a systemic risk. When $Z^{I+1}$ jumps, all the volatilities and
possibly all the assets (when there is a leverage effect) jump
together. This parametrization of multi-dimensional stochastic volatility models
with jumps corresponds to Section 5.3 of \cite{BNS12}. Adding this extra jump
process only makes sense in a multi-dimensional framework, hence we write the
one-dimensional model using the previous equations but without the terms
involving the index $I+1$.

In the following, we compare the efficiencies of several different approaches
based on the theoretical part of the paper in the context of option pricing with
jumps. The problem always boils down to computing the expectation of a function
of a jump diffusion process. 

\subsection{Several importance sampling approaches}

To design an importance sampling Monte Carlo
method, we can either play with the Brownian part --- referred to hereafter as
\textit{Gaussian importance sampling} with an optimal variance denoted
\textit{VarG}, or with the Poisson part --- referred to as \textit{Poisson
importance sampling} with an optimal variance denoted \textit{VarP}, or with
both at the same time. This last approach is named \textit{Gaussian+Poisson
importance sampling} and yields to an optimal variance denoted \textit{VarGP}.
The Gaussian importance sampling approach actually corresponds the methodology
developed in~\cite{jour-lelo-09} but with independent sets of samples for the
optimization part and the true Monte Carlo computation. \\

For each of the three methods, we consider two approaches.

\paragraph{Full importance sampling.} The first approach consists in allowing to
optimize the parameters per time steps, this means that $d = d' =  I \times J$
and $p = p' = (I+1) \times J$. In this setting, the matrices $A$ and $B$ are
identity matrices.  This approach is the more general in one framework, but the
dimension of the optimization problem linked to the variance minimization with
the square of the number of time steps, which yields some interest in trying to
find a sub vector space with smaller dimension in which optimizing the
parameters and which achieves a variance close the global minimum.

\paragraph{Reduced importance sampling.}
The idea of reducing the dimension of the problem is to search for the
parameter $(\t, \l)$ in the subspace $\{ (A \vt, B \vl) \; : \;
\vt \in \R^{d'}, \; \vl \in \Rp^{p'} \}$ where  $A \in \R^{d \times d'}$ is a
matrix with rank $d' \le d$ and $B \in \Rp^{p \times p'}$ a matrix with rank $p'
\le p$. 

We choose to restrict ourselves to adding a constant drift to the Brownian motion
and keeping the Poisson intensity time independent. This corresponds to $d' = I$
and $p' = I+1$ 
$$
A_{(j-1)I+i, i} = \sqrt{t_j-t_{j-1}}, \quad B_{(j-1)(I+1)+k, k} = t_j-t_{j-1}
$$ 
for $j=1,\dots, J$, $i=1, \dots, I$ and $k=1, \dots,I+1$ , all the other
coefficients of $A$ and $B$ being zero.

\subsection{Numerical experiments}

We compare the different importance sampling approaches on four different
financial derivatives: the first two examples are path-dependent single asset
options while the last two examples are basket option with or without barrier
monitoring. To compare the different strategies, we have decided to fix the
number of samples for the Monte Carlo part, which implies that their accuracies
only depend on their variances, which we will compare in different examples. To
determine which method is best, it is convenient to compute their efficiencies
defined as the ratio of the variance divided by the CPU time. \\

In all the following examples, we use the same number of samples for the
approximation of the optimal importance sampling parameters and for the Monte
Carlo computation, ie.  $m(n) = n$.

\paragraph{Asian option.} We consider a discretely monitored Asian option with
payoff 
\begin{align*}
  \left( \frac{1}{J} \sum_{i=1}^J S_{t_i} - K \right)_+.
\end{align*}

Our tests on one dimensional Asian options (see Tables~\ref{tab:asian_1d_merton}
and~\ref{tab:asian_1d_bns}) show that the \textit{Poisson} and
\textit{Gaussian+Poisson} importance sampling methods perform generally better
than the pure \textit{Gaussian} importance sampling approach but they also
require a longer computational time. When taking into account this extra
computational times along with the variance reduction we notice that the
\textit{Poisson} and \textit{Gaussian+Poisson} importance sampling methods yield
the same efficiency for the Merton model (see Table~\ref{tab:asian_1d_merton}).
For the BNS model (see Table~\ref{tab:asian_1d_bns}), the mixed
\textit{Gaussian+Poisson} importance sampling approach achieves a better
variance reduction than the two other methods for a comparable computational
time. By closely looking at the CPU times of the different strategies, it
clearly appears that the reduced approach shows the better efficiency and should
be used in practice.

\begin{table}[h]
  \centering
  \begin{tabular}{lllllll}
    \hline
    & Strike & Price & Var & VarG & VarP & VarGP \\
    \hline
    Full & 90  & 17.88 & 2639 & 2395 & 636 & 529  \\
    Reduced  & & 17.88 & 2639 & 2640 &  839 & 752 \\
    Full & 100 & 14.37 & 2750 & 2624 & 720 & 622 \\
    Reduced &  & 14.37 & 2750 & 2624 & 552 & 470 \\
    Full & 110 & 12.11 & 2327 & 2301 & 470 & 420 \\
    Reduced &  & 12.11 & 2327 & 2301 & 676 & 585 \\
    \hline
  \end{tabular}
  \caption{Discrete Asian option in dimension 1 in the Merton model with
  $S_0=100$, $r=0.05$, $\sigma=0.25$, $\mu=1$, $\alpha=0.5$, $\delta=0.2$,
  $T=1$, $J=12$ and $n=50000$. The CPU time for the crude Monte Carlo approach
  is $0.08$.  The CPU times for the full importance sampling approach are $(0.21,
  0.28, 0.39)$ and for the reduced approach $(0.20, 0.21, 0.26)$. }
  \label{tab:asian_1d_merton}
\end{table}

\begin{table}[h]
  \centering
  \begin{tabular}{lllllll}
    \hline
    & Strike & Price & Var & VarG & VarP & VarGP \\
    \hline
    Full & 90  & 11.85 & 63 & 22.7 & 50 & 13.3  \\
    Reduced  & & 11.85 & 63 & 28.7 &  52.7 & 22.1 \\
    Full & 100 & 3.96 & 47 & 19 & 29.7 & 9.4 \\
    Reduced &  & 3.96  & 47 & 22 & 33 & 14.7 \\
    Full & 110 & 0.92 & 19 & 7.8 & 9 & 3.5 \\
    Reduced &  & 0.92 & 19 & 10 & 11.1 & 5.56 \\
    \hline
  \end{tabular}
  \caption{Discrete Asian option in dimension 1 in the BNS model with
  $S_0=100$, $r=0.05$, $\vl_0=0.01$, $\mu=1$, $\kappa=0.5474$, $\beta=18.6$,
  $T=1$, $J=12$ and $n=50000$. The CPU time for the crude Monte Carlo approach
  is $0.13$.  The CPU times for the full importance sampling approach are $(0.36,
  0.52, 0.93)$ and for the reduced approach $(0.29, 0.29, 0.33)$. }
  \label{tab:asian_1d_bns}
\end{table}

\paragraph{Barrier option.} We consider a discrete monitoring barrier option
with payoff
\begin{align*}
  (S_T - K)_+ \times \ind{\forall 1 \le j \le J, \; S_{t_j} < U}
\end{align*}
where $U$ is the upper barrier.

\begin{table}[h]
  \centering
  \begin{tabular}{lllllll}
    \hline
    & Strike & Price & Var & VarG & VarP & VarGP \\
    \hline
    Full & 90  & 17.88 & 2639 & 2395 & 636 & 529  \\
    Reduced  & & 17.88 & 2639 & 2640 &  839 & 752 \\
    Full & 100 & 14.37 & 2750 & 2624 & 720 & 622 \\
    Reduced &  & 14.37 & 2750 & 2624 & 552 & 470 \\
    Full & 110 & 12.11 & 2327 & 2301 & 470 & 420 \\
    Reduced &  & 12.11 & 2327 & 2301 & 676 & 585 \\
    \hline
  \end{tabular}
  \caption{Discrete barrier option in dimension $1$ in the Merton model with
  $S_0=100$, $r=0.05$, $\sigma=0.2$, $\mu=0.1$, $\alpha=0$, $\delta=0.1$,
  $T=1$, $J=12$, $U=140$ and $n=50000$. The CPU time for the crude Monte Carlo
  approach is $0.08$.  The CPU times for the full importance sampling approach are
  $(0.22, 0.26, 0.37)$ and for the reduced approach $(0.20, 0.20, 0.23)$. }
  \label{tab:barrier_1d}
\end{table}

\paragraph{Basket option.} We consider a basket option on $10$ assets with
payoff 
\begin{align*}
  \left( \sum_{i=1}^I \omega^i S^i_T - K \right)_+
\end{align*}
where the vector $\omega \in \R^{I}$ describes the weight of each asset in the
basket. 

\begin{table}[h]
  \centering
  \begin{tabular}{llllll}
    \hline
    Strike & Price & Var & VarG & VarP & VarGP \\
    \hline
     -10 & 10.61 & 112 & 85 & 66 & 48  \\
     0  & 3.66 & 85 & 66 & 33 & 25  \\
     10 & 1.17 & 111 & 52 & 12 & 10 \\
    \hline
  \end{tabular}
  \caption{Basket option in dimension $I=10$ in the Merton model with
  $S_0^i=100$, $r=0.05$, $\sigma^i=0.2$, $\mu^i=0.1$, $\alpha^i=0.3$,
  $\delta^i=0.2$, $\rho = 0.3$, $T=1$, $\omega^i = \frac{1}{I}$ for
  $i=1,\dots,I/2$, $\omega^i = -\frac{1}{I}$ for $i=I/2+1,\dots,I$ and
  $n=50000$. The CPU time for the crude Monte Carlo approach is $0.06$.  The CPU
  times for the importance sampling approach are $(0.17, 0.20, 0.32)$.}
  \label{tab:basket_10d}
\end{table}

The experiments on the one dimensional barrier option (see
Table~\ref{tab:barrier_1d} lead to very similar conclusions regarding the
efficiencies of the different approaches. Roughly speaking, the
\textit{Gaussian} approach does not bring any variance reduction but costs $2.5$
times the CPU times of the crude Monte Carlo approach. The \textit{Poisson}  and
\textit{Gaussian+Poisson} importance sampling approaches do provide an
impressive variance reductions for equivalent computational times at least in
the reduced size approach. The improvement of the optimal variance obtained by
the full size approaches does not look enough to counter balance the extra
computational time. Actually, the reduced size approaches show far better
efficiencies.

\begin{table}[h]
  \centering
  \begin{tabular}{llllll}
    \hline
    Strike & Price & Var & VarG & VarP & VarGP \\
    \hline
     -10 & 10.21 & 60 & 41 & 48 & 29  \\
     0  & 3.35 & 30 & 21 & 22 & 13  \\
     10 & 0.68 & 8.3 & 5.9 & 5.2 & 2.8 \\
    \hline
  \end{tabular}
  \caption{Basket option in dimension $I=10$ in the Merton model with
  $S_0^i=100$, $r=0.05$, $\sigma^i=0.2$, $\mu^i=1$, $\alpha^i=0.1$,
  $\delta^i=0.01$, $\rho = 0.3$, $T=1$, $\omega^i = \frac{1}{I}$ for
  $i=1,\dots,I/2$, $\omega^i = -\frac{1}{I}$ for $i=I/2+1,\dots,I$ and
  $n=50000$. The CPU time for the crude Monte Carlo approach is $0.06$.  The CPU
  times for the importance sampling approach are $(0.17, 0.20, 0.32)$.}
  \label{tab:basket_10d-2}
\end{table}

Since basket options are not path dependent derivatives, the full and reduced
size approaches coincide and we do not distinguish between the two in
Tables~\ref{tab:basket_10d} and~\ref{tab:basket_10d-2}. In these tables, we can
see that the \textit{Gaussian+Poisson} approach provides better variance
reductions that the pure \textit{Poisson} approach, which in turn outperforms
the pure \textit{Gaussian} strategy. However, except for out of the money
options, the gain brought by the different importance sampling approaches do not
compensate the extra computational time in order to keep up with the crude Monte
Carlo strategy. This lack of efficiency mainly comes from the very simple form
of the payoff which makes the crude Monte Carlo method very fast.

\paragraph{Multidimensional barrier option.} We consider a discrete monitoring
down and out barrier option on a basket of assets with payoff
\begin{align*}
  \left( \sum_{i=1}^I \omega^i S^i_T - K \right)_+
  \ind{\forall 1 \le i \le I, \; \forall 1 \le j \le J, \; S^i_{t_j} >
  b^i}
\end{align*}
where the vector $b \in \R^I$ is a lower barrier.
\begin{table}[h]
  \centering
  \begin{tabular}{lllllll}
    \hline
    & Strike & Price & Var & VarG & VarP & VarGP \\
    \hline
    Full & 0  & 0.59 & 7.00 & 4.03 & 4.36 & 1.98  \\
    Reduced &   & 0.59 & 7.00 & 3.51 & 4.36 & 2.05  \\
    Full & -5  & 1.06 & 13.33 & 8.43 & 9.64 & 4.79  \\
    Reduced &   & 1.06 & 13.33 & 8.56 & 9.81 & 5.42  \\
    Full & -10  & 1.64 & 24.26 & 16.57 & 18.39 & 10.57  \\
    Reduced &   & 1.64 & 24.26 & 16.77 & 18.96 & 11.68  \\
    \hline
  \end{tabular}
  \caption{Barrier option in dimension $I=10$ in the Merton model with
  $S_0^i=100$, $r=0.05$, $\sigma^i=0.2$, $\mu^i=1$, $\alpha^i=0.1$,
  $\delta^i=0.01$, $b^i = 80$, $\rho = 0.3$, $T=1$, $\omega^i = \frac{1}{I}$ for
  $i=1,\dots,I/2$, $\omega^i = -\frac{1}{I}$ for $i=I/2+1,\dots,I$ and $J=12$,
  $n=50000$. The CPU time for the crude Monte Carlo approach is $0.76$.  The CPU
  times for the reduced importance sampling approach are $(1.42, 1.44, 1.52)$
  and for the full importance sampling approach they are $(1.53, 2.41, 3.05)$.}
  \label{tab:barrier_10d}
\end{table}

\begin{table}[h]
  \centering
  \begin{tabular}{lllllll}
    \hline
    & Strike & Price & Var & VarG & VarP & VarGP \\
    \hline
    Full & 100  & 2.97 & 36 & 37.8 & 16 & 16  \\
    Reduced &   & 2.97 & 36 & 36.2 & 16 & 16  \\
    Full & 90  & 12.52 & 36 & 36.4 & 14.5 & 14.5  \\
    Reduced &   & 12.52 & 36 & 36 & 14.3 & 14.3  \\
    Full & 110  & 1.64 & 12.1 & 13.3 & 6.1 & 5.5  \\
    Reduced &   & 0.80 & 12.1 & 12 & 5.3 & 5.4  \\
    \hline
  \end{tabular}
  \caption{Barrier option in dimension $I=5$ in the BNS model with $S_0^i=100$,
  $r=0.05$, $\vl^i=0.01$, $\mu^i=1$, $\kappa^i=0.54$, $\beta^i=18.6$, $b^i =
  70$, $\rho = 0.2$, $T=1$, $\omega^i = \frac{1}{I}$ for $i=1,\dots,I$ and
  $J=12$, $n=50000$. The CPU time for the crude Monte Carlo approach is $0.52$.
  The CPU times for the reduced importance sampling approach are $(1.06, 1.1,
  1.17)$ and for the full importance sampling approach they are $(2.1, 3.8,
  10.5)$.}
  \label{tab:barrier_5d}
\end{table}

Our last two examples deal multi-dimensional barrier options with discrete
monitoring. The first striking result to notice when looking at
Tables~\ref{tab:barrier_10d} and~\ref{tab:barrier_5d} concerns the huge CPU
times of the full approaches which nonetheless do not significantly improve the
variance reduction compared to the reduced size methods. This remark definitely
advocates the use of reduced size approaches.  The variance is always divided by
a factor between $2$ and $3$, whereas the CPU time is only twice the one of the
crude Monte Carlo approach. In the Merton model case
(Table~\ref{tab:barrier_10d}), the \textit{Gaussian+Poisson} approach always
provides the largest variance reduction for a computational time very close to
the two other methods, meanwhile in the BNS model (Table~\ref{tab:barrier_5d})
the \textit{Poisson} and \textit{Gaussian+Poisson} methods perform similarly.
The efficiency of the pure \textit{Poisson} approach comes from the particular
form of the BNS model which includes jumps in the volatility process. These
jumps seem to have a larger impact on the overall variance that the Brownian
motion itself.

\section{Conclusion}

In this work, we have studied an importance sampling based Monte Carlo method for jump
processes. The proposed algorithm splits into two parts. First, we compute the
optimal change of measure using Newton's algorithm on a sample average
approximation of the stochastic optimization problem. This method is very robust
and does not require any fine tuning unlike stochastic approximation methods.
Second, we use this estimator of the optimal measure change in an independent
Monte Carlo method. We have established a several convergence results for this
approach and in particular we have proved that it satisfies a Central Limit
Theorem with optimal limiting variance. All the numerical examples we have
investigated advocates the use of reduced size problems to significantly speed up the
computations since the loss of variance reduction compared to the full approach
remains negligible. This importance sampling approach proves to be all the more
efficient as the jump and the diffusion parts are mixed up.

\bibliographystyle{abbrvnat}
\bibliography{biblio}

% \section{Proof of Lemma~\ref{lem:chgt}}
% \label{sec:appendix_lem0}

% \begin{proof}[Proof of Lemma~\ref{lem:chgt}]
%   Let $\t \in \R^d$ and $\l \in \Rp^p$.
%   \begin{align*}
%     \E\left[h(G,N^{\mu})\right] &= \sum_{k \in \N^p} \int_{\R^{d}} h(x,k)
%     \frac{1}{{2\pi}^{d/2}}\expp{-\frac{|x|^{2}}{2}} dx \prod_{i=1}^p
%     \expp{-\mu_i} \frac{\mu_i^{k_i}}{k_i!}\\	
%     &= \sum_{k \in \N^p} \int_{\R^{d}} h(x + \t,k)
%     \frac{1}{{2\pi}^{d/2}}\expp{-\frac{|x + \t|^{2}}{2}} dx \prod_{i=1}^p
%     \expp{-\mu_i} \frac{\mu_i^{k_i}}{k_i!}\\	
%     &= \sum_{k \in \N^p} \int_{\R^{d}} h(x + \t,k)
%     \frac{1}{{2\pi}^{d/2}}\expp{-\frac{|x|^{2}}{2}} \expp{-\frac{|\t|^2}{2} - \t
%     \cdot x} dx \prod_{i=1}^p
%     \expp{-\l_i} \frac{\l_i^{k_i}}{k_i!} \expp{\l_i - \mu_i}
%     \left(\frac{\mu_i}{\l_i}\right)^{k_i}\\	
%     &= \E\left[ h(G+\t, N^\l) \expp{- \t \cdot G - \frac{|\t|^2}{2}} \prod_{i=1}^p
%     \expp{\l_i -\mu_i} \left(\frac{\mu_i}{\l_i}\right)^{N^{\l_i}_i}
%     \right]
%   \end{align*}
% \end{proof}

\end{document}